\def\qed{\nopagebreak\hfill{\rule{4pt}{7pt}}}
\newtheorem{thm}{Theorem}[section]
\newtheorem{lem}[thm]{Lemma}
\newtheorem{conj}[thm]{Conjecture}
\theoremstyle{remark}
\numberwithin{equation}{section}
\begin{document}
\begin{center}
{\Large \bf  Laguerre inequalities and determinantal inequalities for the finite difference of the partition functions}
\end{center}

\begin{center}
Eve Y.Y. Yang\\[8pt]
Department of Mathematics\\
Tianjin University\\
Tianjin 300072, P. R. China\\[6pt]
Email: {\tt yangyaoyao@tju.edu.cn\tt }
\end{center}

\vspace{0.3cm} \noindent{\bf Abstract.} 
The paper aims to establish the Tur\'an inequalities, the Laguerre inequalities (order $2$), and the determinantal inequalities (order $3$) for $\Delta p(n)$ and $\Delta \bar{p}(n)$, where $\Delta f(n)$ is the first-order forward difference of a sequence $f(n)$. The functions $p(n)$ and $\bar{p}(n)$ denote the partition function and overpartition function, respectively. Conjectures for thresholds of Laguerre inequalities (order $m$) and positivity of $m$-order determinants are proposed, extending to 
$\Delta^k p(n)$ and $\Delta^k \bar{p}(n)$, with $1 \leq m \leq 11$ and $1 \leq k \leq 5$.

\noindent {\bf Keywords:} the finite difference, partition function, overpartition function, Laguerre inequality, determinant inequality
\\
\noindent {\bf AMS Classification:} 05A20, 11P82 

\section{Introduction}
A positive integer partition of $n$ is a non-increasing sequence of positive integers $\lambda=(\lambda_1,\ldots,\lambda_\ell)$ with $\sum_{i=1}^{\ell}\lambda_i=n$. An overpartition of $n$ allows overlining the first occurrence of each distinct part in the partition. The number of partitions and overpartitions of $n$ are denoted by $p(n)$ and $\bar{p}(n)$, respectively.

Various inequalities concerning $p(n)$ and $\bar{p}(n)$ have been explored using analytical tools, incorporating the Hardy-Ramanujan-Rademacher formulae for $p(n)$ and $\bar{p}(n)$ 
and associated error bounds detailed in \cite{Engel, Lehmer1,Lehmer2, Liu, Zuckerman}.

 The first-order forward difference of a sequence $f(n)$ is defined as 
\[\Delta f(n)=f(n+1)-f(n).\]

Iterating this operation yields the $k$-th order forward difference, 
\begin{equation*}
	\Delta^{k}f(n)=\Delta(\Delta^{k-1}f(n)).
	\end{equation*}

In 1977, Good \cite{Good} conjectured $\Delta^{r}p(n)\geq 0$ for a positive integer $n(r)$. 
Employing the Hardy-Ramanujan-Rademacher formula for $p(n)$, Gupta \cite{Gupta} proved $\Delta^{r}p(n)>0$ for sufficiently large $n$. 
 Subsequently, Odlyzko \cite{Odlyzko} confirmed Good's conjecture and derived the asymptotic formula for  $n(r)$: \[n(r)\sim (6/\pi^2)r^2\log^2 r \quad \text{as} \quad r\rightarrow \infty.\]
Knessl and Keller \cite{Knessl0,Knessl} refined this asymptotic approximation $n'(r)$ of $n(r)$ by deriving a recursion equation for $\Delta^{r}p(n)$ with respect to $n$ and solving it asymptotically. They showed that \[|n'(r)-n(r)|\leq2, \quad \text{for all}\quad r\leq75.\] 
  Almkvist \cite{Almkvist,Almkvist2} discovered a more explicit formula for $\Delta^{r}p(n)>0$ and established specific equations satisfied by $n(r)$.
  Wang, Xie and Zhang \cite{wang1} extended this to $\Delta^{r}\bar{p}(n)>0$ and provided an upper bound for $\Delta^{r}\bar{p}(n)$ for any $r\geq 1$. 
Yang \cite{Yang} extended this approach effectively to $\Delta^{r}q(n)$ and $\{\Delta^{r}\triangle_{k}(n)\}_ {k=1,2}$, where $q(n)$ is the distinct partition function and $\triangle_{k}(n)$ is the broken $k$-diamond partition function.

Consider the $j$-shifted differences of a sequence $f(n)$, denoted as $\Delta_{j}f(n)=f(n)-f(n-j)$, where $1\leq j<n$. Notably, for $j=1$, this corresponds to the first-order backward difference, distinct from the forward difference defined earlier. Gomez, Males and Rolen \cite{Gomez} proved that
\[\Delta^{2}_{j}p(n)=\Delta_{j}\left(\Delta_{j}p(n)\right)=p(n)-2p(n-j)+p(n-2j)>0\] 
by using the Hardy-Ramanujan-Rademacher formula for $p(n)$ and Lehmer's error bound. They conjectured $\Delta^{r}_{j}p(n)\geq0$ for a fixed $j$, any given $r$, and sufficiently large $n$. The case for $j=1$ had been previously proved by Gupta \cite{Gupta}.
Banerjee \cite{Banerjee} proved $\Delta^{2}_{j}p(n)\geq0$ and $\Delta^{2}_{j}\bar{p}(n)>0$ using an elementary combinatorial approach. 
Numerous related works exist \cite{Canfield,Chen-Wang-Xie,Knessl00,Merca}.

The Tur\'an-type inequalities arise in the study of the Maclaurin coefficients of real entire functions in the Laguerre-P\'olya class. For a more detailed study, we refer to \cite{Craven,Dimitrov,Levin,Polya,Rahman}.

A sequence $\{a_n\}_{n\geq0}$ of real numbers is log-concave if it satisfies the (second order) Tur\'an inequality $a_n^2\geq a_{n-1}a_{n+1}$ for $n\geq1$. It satisfies the third-order Tur\'an inequality if, for $n\geq1$,
\begin{equation*}
  4(a_n^2-a_{n-1}a_{n+1})(a_{n+1}^2-a_na_{n+2})\geq(a_na_{n+1}-a_{n-1}a_{n+2})^2.
\end{equation*}
Note that the log-concavity of a sequence $f(n)$ implies that $-\Delta^2 \log  f(n)\geq 0$.

Define the operator $\mathcal{L}$ as
\begin{eqnarray*}
\mathcal{L}a_{n}=a_{n}^2-a_{n-1}a_{n+1}\quad~~\text{and}\quad~~\mathcal{L}^ra_{n}=\mathcal{L}(\mathcal{L}^{r-1}a_{n}).
\end{eqnarray*}
A sequence $\{a_n\}_{n\geq0}$ is said to be $r$-log-concave if, for $1\leq k\leq r$ and $n\geq k$, $\{\mathcal{L}^ka_{n}\}_{n\geq1}$ are all nonnegative sequences.
It should be noted that $2$-log-concavity is consistent with the double Tur\'an inequality.

The Tur\'an-type inequalities are closely related to the Jensen polynomials. The Jensen polynomials of degree $d$ and shift $n$ associated with an arbitrary real sequence $\{a_n\}_{n\geq0}$ are defined by
\begin{equation*}
J^{d,n}_a(X)\coloneqq\sum_{j=0}^d{d\choose j}a_{n+j}X^j.
\end{equation*}

In general, a sequence $\{a_{n}\}_{n\geq 0}$ satisfies the Tur\'an inequality of order $d$ at $n$ if and only if $J^{d,n-1}_a(X)$ is hyperbolic. A real polynomial is said to be hyperbolic if all of its zeros are real.

P\'olya \cite{Polya} proved that the Riemann hypothesis (RH) is equivalent to the hyperbolicity of the polynomials $J^{d,n}_{\hat{\gamma}_k}$ for all $d,n\geq0$, where 
 $\{\hat{\gamma}_k\}_{k\geq0}$ is the sequence of Maclaurin coefficients of the Riemann Xi-function. Due to the difficulty of proving RH, research has  
concentrated on proving hyperbolicity for all $n\geq0$ when $d$ is small. The cases $d=2$ and $d=3$ imply that $\{\hat{\gamma}_k\}_{k\geq0}$ satisfies both the Tur\'an inequality and the third-order Tur\'an inequality, as proved by Csordas, Norfolk and Varga \cite{Csordas}, and Dimitrov and Lucas \cite{Dimitrov2}, respectively.

 Further properties of the Tur\'an-type inequalities, the Laguerre-P\'olya class, the Jensen polynomials, and the Riemann Xi-function are detailed in
\cite{Craven,Csordas,Csordas2,Dimitrov,Dimitrov2,Jensen,Polya}.

The Laguerre inequality, also relevant to the Laguerre-P\'olya class, initially formulated for a polynomial function $f(x)$, is expressed as
\begin{equation}\label{Laguerre inequality}
L_1(f(x))\coloneqq{f'(x)}^2-f(x)f''(x)\geq 0,\quad x\in\mathbb{R}.
\end{equation} 
 
Laguerre \cite{Laguerre} stated that if $f(x)$ is hyperbolic, then it satisfies \eqref{Laguerre inequality}. Skovgaard \cite{Skovgaard} asserted that if $f(x)$ belongs to the Laguerre-P\'olya class, then for any $p\in\mathbb{Z}_{\geq0}$, $f^{p}(x)$ satisfies \eqref{Laguerre inequality}, where $f^{p}(x)$ represents the $p$-th derivative of $f(x)=f^{0}(x)$.

In 1913, Jensen \cite{Jensen} 
defined a higher order generalization of the Laguerre inequality, namely, for $f(x)$ in the Laguerre-P\'olya class,
\begin{equation*}\label{L_{m}}
L_{n}(f(x))\coloneqq\frac{1}{2}\sum_{k=0}^{2n}(-1)^{n+k}{{2n}\choose k}f^{k}(x)f^{2n-k}(x) \geq 0.
\end{equation*}
These inequalities, often referred to as Laguerre-type inequalities, yield the classical Laguerre inequality \eqref{Laguerre inequality} for $n=1$. Csordas and Varga \cite{Csordas2} showed that if a function $f(x)$ satisfies $ L_{n}(f(x))\geq0 $ for all $n$ and all $x\in \mathbb{R}$, then $f(x)$ belongs to the Laguerre-P\'olya class. 

 Recently, Wang and Yang \cite{wang2} considered whether the discrete sequence $\{a_n\}_{n\geq0}$ has the similar results with higher order Laguerre inequalities. They defined that a sequence $\{a_n\}_{n\geq0}$ satisfies the Laguerre inequality of order $m$ if
\begin{equation*}
L_{m}(a_n)\coloneqq \frac{1}{2}\sum_{k=0}^{2m}(-1)^{k+m}{{2m}\choose k}a_{n+k}a_{2m-k+n} \geq 0.
\end{equation*}
 
For $m = 1$, 
this reduces to
\begin{equation*}
a_{n+1}^2-a_{n}a_{n+2}\geq 0,
\end{equation*}
which corresponds to the log-concavity of $\{a_n\}_{n\geq 0}$.

Wang and Yang \cite{wang2,wang4} established the Laguerre inequality of order $2$ for some discrete sequences, including $p(n)$, $\bar{p}(n)$, $q(n)$, and others. Wagner \cite{Wagner} proved that $p(n)$ satisfies the Laguerre inequalities of any order as $n\rightarrow \infty$ and 
conjectured the thresholds of the $m$-rd Laguerre inequalities of $p(n)$ for $m\leq 10$. Dou and Wang \cite{dw} proved Wagner's conjecture for $3\leq m\leq 9$. 
For more  work on the relation between the Laguerre inequality, the Tur\'an inequality and the Laguerre-P\'olya class, refer to \cite{Cardon, Craven, Csordas5,Csordas3, K.Dilcher, W.H.Foster,Patrick1, Patrick}.

Chen \cite{Chen3} conducted a comprehensive study on inequalities related to invariants of a binary form. He examined the following three invariants associated with the hyperbolicity of the Jensen polynomial of degree $4$.
\begin{equation*}
\begin{split}
A(a_0,a_1,a_2,a_3,a_4)&=a_0a_4-4a_1a_3+3a_2^2,\\[9pt]
B(a_0,a_1,a_2,a_3,a_4)&=-a_0a_2a_4+a_2^3+a_0a_3^2+a_1^2a_4-2a_1a_2a_3,\\[9pt]
I(a_0,a_1,a_2,a_3,a_4)&=A(a_0,a_1,a_2,a_3,a_4)^3-27B(a_0,a_1,a_2,a_3,a_4)^2.
\end{split}
\end{equation*}

Throughout the rest of this paper, we shall use the notations $A$, $B$, and $I$ to denote $A(a_0,a_1,a_2,a_3,a_4)$, $B(a_0,a_1,a_2,a_3,a_4)$, and $I(a_0,a_1,a_2,a_3,a_4)$
respectively, associated with a sequence $\{a_n\}_{n\geq0}$.

Indeed, $A>0$ coincides with the Laguerre inequality of order $2$. $B>0$ can be expressed as
\[
\left|\begin{array}{ccc}
		a_2 & a_3 &  a_4\\
		a_1 & a_2 & a_3\\
		a_0 & a_1 & a_2\\
		\end{array} \right|>0,
\]
which has been proven to be equivalent to $2$-log-concavity in combinatorics. For $p(n)$, $B>0$ has been showed by Hou and Zhang \cite{hz}, and Jia and Wang \cite{Jia-Wang-2018} independently. Recently, Wang and Yang \cite{wang3} proved 
 $\det (a_{n-i+j})_{1\leq i,j\leq 4}>0$ for the sequence $\{a_n\}_{n\geq0}$ involving $p(n)$ and $\bar{p}(n)$ and provided an iterated approach to compute $\det (a_{n-i+j})_{1\leq i,j\leq m}$ for any positive integer $m$.

Recent work by various mathematicians has established Tur\'an inequalities of order $d$ for $p(n)$. Nicolas \cite{Nicolas} and DeSalvo and Pak \cite{Desalvo} independently confirmed the log-concavity of $\{p(n)\}_{n\geq 25}$. 
Chen \cite{Chen3} conjectured and Chen, Jia and Wang \cite{Chen-Jia-Wang-2017} proved the third-order Tur\'an inequality for $\{p(n)\}_{n\geq 95}$. 
 Chen, Jia and Wang \cite{Chen-Jia-Wang-2017} extended this to propose a conjecture for Tur\'an inequalities of order $d$ for $p(n)$ with $d\geq 4$ and sufficiently large $n$, which was proven by Griffin, Ono, Rolen and Zagier \cite{Zagier}. Larson and Wagner \cite{larson} provided the thresholds for Tur\'an inequalities of order $4$ and $5$.

 Tur\'an inequalities for other partition functions have been extensively investigated, refer to \cite{Craig,DJ,Dong,Jia,wang4,Yang1} for further details.
 
 Let 
 \begin{equation*}
 \Delta p(n)=p(n+1)-p(n)\quad \text{and} \quad\Delta \bar{p}(n)=\bar{p}(n+1)-\bar{p}(n).
 \end{equation*}
  This paper establishes the Tur\'an inequalities, the Laguerre inequalities of order $2$, and the determinantal inequalities of order $3$ for $\Delta p(n)$ and $\Delta \bar{p}(n)$. Our main tool, due to Dou and Wang \cite{dw}, relies on Lemma \ref{boundp} for $p(n)$ and Lemma \ref{boundbarp} for $\bar{p}(n)$, both provided by Wang and Yang \cite{wang3}.
\begin{lem}[{\cite[Lemma 2.1]{wang3}}]\label{boundp}
For any given integer $l$, there exists $N(l)$ such that for all $n \geq N(l)$,
\begin{equation*}
			\frac{\sqrt{12} \pi^{2} e^{\mu}}{36 \mu^{2}}\left(1-\frac{1}{\mu}-\frac{1}{\mu^{l}}\right)<p(n)<\frac{\sqrt{12} \pi^{2} e^{\mu}}{36 \mu^{2}}\left(1-\frac{1}{\mu}+\frac{1}{\mu^{l}}\right),
	\end{equation*}
where $\mu$ is the abbreviation for $\mu(n)$, and 
\begin{equation}\label{mu}
  \mu(n)=\frac{\pi \sqrt{24 n-1}}{6}.
\end{equation}
\end{lem}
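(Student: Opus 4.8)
The plan is to derive the two-sided estimate from the Hardy--Ramanujan--Rademacher exact formula for $p(n)$ by isolating its first term and bounding everything else. Write the Rademacher series as $p(n)=T_1(n)+E_1(n)$, where
\[
T_1(n)=\frac{1}{\pi\sqrt 2}\,\frac{d}{dn}\!\left(\frac{\sinh\mu}{\sqrt{n-\tfrac1{24}}}\right)
\]
is the $k=1$ term (the associated Kloosterman sum $A_1(n)$ equals $1$) and $E_1(n)$ is the tail over $k\ge 2$. Lehmer's error bound \cite{Lehmer1,Lehmer2}, or the convergence estimates of Rademacher--Zuckerman \cite{Zuckerman}, supplies an explicit bound of the shape $|E_1(n)|\le C\,e^{\mu/2}$ with $C$ absolute; in fact any estimate $|E_1(n)|=O(e^{c\mu})$ with a fixed $c<1$ suffices for what follows.

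Next I would evaluate $T_1(n)$ in closed form. From \eqref{mu} one has $\mu^2=\tfrac{\pi^2}{36}(24n-1)$, hence $n-\tfrac1{24}=\tfrac{3\mu^2}{2\pi^2}$, $\sqrt{n-\tfrac1{24}}=\tfrac{\sqrt 3\,\mu}{\sqrt 2\,\pi}$, and $\tfrac{d\mu}{dn}=\tfrac{\pi^2}{3\mu}$. Substituting into $T_1(n)$ and differentiating $\tfrac{\sinh\mu}{\mu}$, then writing $\cosh\mu=\tfrac{e^\mu+e^{-\mu}}2$ and $\sinh\mu=\tfrac{e^\mu-e^{-\mu}}2$, one gets after simplification
\[
T_1(n)=\frac{\sqrt{12}\,\pi^2}{36\mu^2}\left(1-\frac1\mu\right)e^{\mu}+\frac{\sqrt{12}\,\pi^2}{36\mu^2}\left(1+\frac1\mu\right)e^{-\mu},
\]
so that
\[
p(n)-\frac{\sqrt{12}\,\pi^2 e^{\mu}}{36\mu^2}\left(1-\frac1\mu\right)=\frac{\sqrt{12}\,\pi^2 e^{-\mu}}{36\mu^2}\left(1+\frac1\mu\right)+E_1(n).
\]

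Finally, dividing through by the main factor $\tfrac{\sqrt{12}\,\pi^2 e^{\mu}}{36\mu^2}$, the right-hand side becomes $e^{-2\mu}\!\left(1+\tfrac1\mu\right)+\tfrac{36\mu^2 e^{-\mu}}{\sqrt{12}\,\pi^2}\,E_1(n)$, which by the bound on $E_1$ is $O\!\left(\mu^2 e^{-(1-c)\mu}\right)$ and in particular tends to $0$ faster than $\mu^{-l}$ for every fixed $l$. Since $\mu(n)\to\infty$ as $n\to\infty$, for each $l$ there is $N(l)$ such that for all $n\ge N(l)$ this normalized remainder lies in $(-\mu^{-l},\mu^{-l})$, which is exactly the claimed inequality. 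The only delicate point is securing a clean, explicit bound on the Rademacher tail $E_1(n)$ of strictly smaller exponential order than $e^{\mu}$; this is classical but must be quoted carefully. Note also that the argument only produces the existence of $N(l)$, matching the statement of the lemma; obtaining an effective $N(l)$ would require tracking the constants in the error bound and in the comparison $\mu^2 e^{-(1-c)\mu}<\mu^{-l}$.
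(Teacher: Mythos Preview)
Your argument is correct, and in fact the paper does not prove this lemma at all: it is quoted verbatim from \cite{wang3} (Wang and Yang). The only additional information the present paper supplies is the remark, immediately following the two lemmas, that in \cite{wang3} one may take $N(l)$ to be any $n$ satisfying $6e^{-\mu/2}<\mu^{-l}$. That remark confirms that the proof in the source reference follows exactly the line you sketch: isolate the $k=1$ Rademacher term, compute it in closed form (your evaluation of $T_1(n)$ is accurate), and control the tail via a Lehmer-type bound of order $e^{\mu/2}$. The specific inequality $6e^{-\mu/2}<\mu^{-l}$ shows that Wang and Yang do track the constants and obtain an explicit $N(l)$, whereas your write-up stops at existence; if you want to match the effective form, you would need to cite a concrete Lehmer bound (e.g.\ the $N=2$ case of Lehmer's estimate for the Rademacher remainder) and carry the constant through the normalization step rather than writing $O(\mu^2 e^{-(1-c)\mu})$.
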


\begin{lem}[{\cite[Lemma 5.1]{wang3}}]\label{boundbarp}
For any given integer $t$, there exists $N(t)$ such that for all $n \geq N(t)$,
\begin{equation*}
			\frac{\pi^{2} e^{\bar{\mu}}}{8 \bar{\mu}^{2}}\left(1-\frac{1}{\bar{\mu}}-\frac{1}{\bar{\mu}^{t}}\right)<\bar{p}(n)<\frac{\pi^{2} e^{\bar{\mu}}}{8 \bar{\mu}^{2}}\left(1-\frac{1}{\bar{\mu}}+\frac{1}{\bar{\mu}^{t}}\right),
	\end{equation*}
where $\bar{\mu}$ is the abbreviation for $\bar{\mu}(n)$, and 
\begin{equation*}
\bar{\mu}(n)=\pi \sqrt{n}.
\end{equation*}
\end{lem}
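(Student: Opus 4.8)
The plan is to read the two bounds off the exact Hardy--Ramanujan--Rademacher--Zuckerman formula for $\bar p(n)$, retaining only the first term of the convergent series and estimating everything else. Applying the circle method to $\sum_{n\ge0}\bar p(n)q^{n}=(-q;q)_\infty/(q;q)_\infty=\eta(2\tau)/\eta(\tau)^{2}$, a weakly holomorphic modular form of weight $-1/2$ whose singularities relevant to the method lie at the roots of unity of odd order, yields (Zuckerman \cite{Zuckerman}; see also \cite{Engel})
\begin{equation*}
\bar p(n)=\frac{1}{2\pi}\sum_{\substack{k\ge1\\ 2\nmid k}}\sqrt{k}\,A_{k}(n)\,\frac{d}{dn}\!\left(\frac{\sinh\!\big(\pi\sqrt{n}/k\big)}{\sqrt{n}}\right),
\end{equation*}
where $A_{k}(n)$ is a Kloosterman-type sum of at most $k$ roots of unity, with $A_{1}(n)=1$ and $|A_{k}(n)|\le k$.

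First I would evaluate the $k=1$ term in closed form. Writing $\bar\mu=\bar\mu(n)=\pi\sqrt{n}$, so that $d\bar\mu/dn=\pi^{2}/(2\bar\mu)$, a direct differentiation gives
\begin{equation*}
\frac{1}{2\pi}\,\frac{d}{dn}\!\left(\frac{\sinh(\pi\sqrt{n})}{\sqrt{n}}\right)=\frac{\pi^{2}\big(\bar\mu\cosh\bar\mu-\sinh\bar\mu\big)}{4\bar\mu^{3}}=\frac{\pi^{2}e^{\bar\mu}}{8\bar\mu^{2}}\Big(1-\frac{1}{\bar\mu}\Big)+\frac{\pi^{2}e^{-\bar\mu}}{8\bar\mu^{2}}\Big(1+\frac{1}{\bar\mu}\Big),
\end{equation*}
the last equality coming from $2\cosh\bar\mu=e^{\bar\mu}+e^{-\bar\mu}$ and $2\sinh\bar\mu=e^{\bar\mu}-e^{-\bar\mu}$. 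Thus the central expression $\frac{\pi^{2}e^{\bar\mu}}{8\bar\mu^{2}}(1-1/\bar\mu)$ of the lemma appears \emph{exactly}, and the quantity to be bounded splits as $\bar p(n)-\frac{\pi^{2}e^{\bar\mu}}{8\bar\mu^{2}}(1-1/\bar\mu)=E_{1}(n)+E_{2}(n)$, where $E_{1}(n)=\frac{\pi^{2}e^{-\bar\mu}}{8\bar\mu^{2}}(1+1/\bar\mu)$ and $E_{2}(n)=\frac{1}{2\pi}\sum_{k\ge3,\ 2\nmid k}\sqrt{k}\,A_{k}(n)\,\frac{d}{dn}\big(\sinh(\pi\sqrt{n}/k)/\sqrt{n}\big)$.

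Next I would bound $E_{1}$ and $E_{2}$. Plainly $0<E_{1}(n)\ll e^{-\bar\mu}\bar\mu^{-2}$. For $E_{2}$, set $v=\bar\mu/k$; the same differentiation gives $\big|\frac{d}{dn}(\sinh(\bar\mu/k)/\sqrt{n})\big|=\pi^{3}|v\cosh v-\sinh v|/(2k^{3}v^{3})$, which is $\ll k^{-1}\bar\mu^{-2}e^{\bar\mu/k}$ for $3\le k\le\bar\mu$ and $\ll k^{-3}$ for $k>\bar\mu$ (since $v\cosh v-\sinh v\asymp v^{3}$ as $v\to0$). Together with $|A_{k}(n)|\le k$ this already makes the series absolutely convergent and yields a Lehmer-type estimate
\begin{equation*}
|E_{2}(n)|\ \ll\ \frac{e^{\bar\mu/3}}{\bar\mu^{2}}\sum_{3\le k\le\bar\mu}k^{1/2}+\sum_{k>\bar\mu}k^{-3/2}\ \ll\ \frac{e^{\bar\mu/3}}{\bar\mu^{1/2}},
\end{equation*}
in the spirit of the error analyses in \cite{Lehmer1,Lehmer2,Engel}. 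Finally, for any fixed integer $t$,
\begin{equation*}
|E_{1}(n)|+|E_{2}(n)|\ \ll\ \Bigl(e^{-2\bar\mu}\bar\mu^{t}+e^{-2\bar\mu/3}\bar\mu^{t+3/2}\Bigr)\,\frac{\pi^{2}e^{\bar\mu}}{8\bar\mu^{2}\bar\mu^{t}},
\end{equation*}
and the parenthesized factor tends to $0$ as $n\to\infty$; hence there is $N(t)$ with $|E_{1}(n)|+|E_{2}(n)|<\frac{\pi^{2}e^{\bar\mu}}{8\bar\mu^{2}}\cdot\bar\mu^{-t}$ for all $n\ge N(t)$, which is exactly the claimed two-sided inequality.

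The differentiations and the splitting of the tail at $k\asymp\bar\mu$ are routine; the one genuinely technical point is making the tail estimate for $E_{2}(n)$ fully effective --- pinning down the implied constants precisely enough to exhibit an admissible threshold $N(t)$ --- which is where a Lehmer-type bound for the Zuckerman series has to be pushed through with care, and is the step I expect to absorb most of the work. The companion Lemma \ref{boundp} for $p(n)$ follows by the identical scheme, with $\eta(2\tau)/\eta(\tau)^{2}$ replaced by $1/\eta(\tau)$, the constant $\pi^{2}/8$ by $\sqrt{12}\,\pi^{2}/36$, and $\bar\mu=\pi\sqrt{n}$ by $\mu=\pi\sqrt{24n-1}/6$.
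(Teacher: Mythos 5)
Note first that the paper does not prove Lemma \ref{boundbarp} at all: it is imported verbatim as \cite[Lemma 5.1]{wang3}, the only trace of its origin being the remark that $N(t)$ can be taken as the solution of $6e^{-\bar\mu/2}<\bar\mu^{-t}$. Your reconstruction is correct and is essentially the standard argument behind the cited source: extract the $k=1$ term of Zuckerman's exact series for $\bar{p}(n)$ (your closed-form evaluation of $\frac{1}{2\pi}\frac{d}{dn}(\sinh(\pi\sqrt n)/\sqrt n)$ and the resulting splitting into the main term plus $E_1+E_2$ check out, as do the $e^{\bar\mu/3}\bar\mu^{-1/2}$ tail estimate and the final comparison against $\frac{\pi^2e^{\bar\mu}}{8\bar\mu^2}\bar\mu^{-t}$). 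The one caveat is the one you already flag: the lemma as literally stated only asserts existence of $N(t)$, for which your non-effective $\ll$ bounds suffice, but the downstream use in this paper requires explicit thresholds (e.g.\ concrete $n\ge N(l)$ cutoffs), so an effective Lehmer-type constant in the $E_2$ estimate --- precisely what produces the quoted criterion $6e^{-\bar\mu/2}<\bar\mu^{-t}$ in \cite{wang3}, building on \cite{Engel,Lehmer1} --- would still have to be carried out.
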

Wang and Yang \cite{wang3} inferred that $n \geq N(k)$ ($k=l,t$) in the above Lemmas are solutions to $6 e^{-\frac{u}{2}} < \frac{1}{u^k}$ ($u=\mu$, $\bar{\mu}$). 

The remaining of this paper is organized as follows. Section \ref{TIp} establishes the Tur\'an inequalities for $\Delta p(n)$ and $\Delta \bar{p}(n)$. Section \ref{LIp}, following the spirit of Section \ref{TIp}, establishes the Laguerre inequalities for $\Delta p(n)$ and $\Delta \bar{p}(n)$. Similarly, Section \ref{DIp} establishes the determinantal inequalities of order $3$ for $\Delta p(n)$ and $\Delta \bar{p}(n)$. We conclude in Section \ref{Conj} with open problems for further exploration.

\section{The Tur\'an inequalities for $\Delta p(n)$ and $\Delta \bar{p}(n)$}\label{TIp}
In this section, we will establish the Tur\'an inequalities for $\Delta p(n)$ and $\Delta \bar{p}(n)$, detailed in Theorem \ref{TIv} and Theorem \ref{TI_barv}, respectively. 
\begin{thm}\label{TIv}
  For $n \geq 71$, $\Delta p(n)$ satisfies the Tur\'an inequality,
  \begin{equation}\label{TIneq v}
			\left(\Delta p(n)\right)^2>\left(\Delta p(n-1)\right)\left(\Delta p(n+1)\right).
	\end{equation}
\end{thm}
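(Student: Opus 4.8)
The plan is to adapt the method of Dou and Wang \cite{dw}: using the two-sided bounds for $p(n)$ from Lemma~\ref{boundp}, we sandwich $\Delta p(n)$ and $\Delta p(n\pm1)$ between explicit functions of $\mu(n)$, reduce \eqref{TIneq v} to a single-variable inequality in $\mu=\mu(n)$, and finish with a finite computation for the small $n$ below the analytic threshold.

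Concretely, fix a suitable integer $l$ (taking $l$ large enough—$l=3$ should already suffice—that the $\mu^{-l}$ errors are negligible against the $\mu^{-3}$-sized quantity that drives the inequality), and for $n\ge N(l)$ put
\[
D(n)=\frac{\sqrt{12}\,\pi^{2}e^{\mu(n)}}{36\,\mu(n)^{2}}\Bigl(1-\tfrac{1}{\mu(n)}-\tfrac{1}{\mu(n)^{l}}\Bigr),\qquad
U(n)=\frac{\sqrt{12}\,\pi^{2}e^{\mu(n)}}{36\,\mu(n)^{2}}\Bigl(1-\tfrac{1}{\mu(n)}+\tfrac{1}{\mu(n)^{l}}\Bigr),
\]
so that $D(n)<p(n)<U(n)$ by Lemma~\ref{boundp}. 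Since $\Delta p(n)=p(n+1)-p(n)$, this yields
\[
\underline{\delta}(n)\coloneqq D(n+1)-U(n)<\Delta p(n)<U(n+1)-D(n)\eqqcolon\overline{\delta}(n).
\]
A short preliminary estimate shows $\underline{\delta}(n)>0$ once $n$ is large (the leading term of $\Delta p(n)$ is positive, of order $e^{\mu}\mu^{-3}$), hence $(\Delta p(n))^{2}>\underline{\delta}(n)^{2}$; combined with $0<\Delta p(n\pm1)<\overline{\delta}(n\pm1)$, it suffices to prove
\[
\underline{\delta}(n)^{2}>\overline{\delta}(n-1)\,\overline{\delta}(n+1)\qquad\text{for all }n\ge N,
\]
for an explicit $N$. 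Using $\mu(n\pm1)^{2}=\mu(n)^{2}\pm\tfrac{2\pi^{2}}{3}$ and $\mu(n+2)^{2}=\mu(n)^{2}+\tfrac{4\pi^{2}}{3}$, the left and right sides become explicit functions of the single real variable $\mu=\mu(n)$, and it is enough to verify the inequality for all $\mu$ above a threshold.

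The heart of the argument is this last inequality. Dividing out the common growth, $\log\bigl(\overline{\delta}(n-1)\overline{\delta}(n+1)\bigr)-\log\bigl(\underline{\delta}(n)^{2}\bigr)$ carries the term $\mu(n-1)+\mu(n+1)-2\mu(n)$, which is strictly negative and of magnitude $\approx\pi^{4}/(9\mu^{3})$ because $\mu(n)=\tfrac{\pi}{6}\sqrt{24n-1}$ is concave in $n$; this negative term is exactly what forces \eqref{TIneq v}. Expanding $\mu(n\pm1)$, $\mu(n+2)$ and the brackets $\log(1-\mu^{-1}\pm\mu^{-l})$ in powers of $\mu^{-1}$, the inequality reduces to a polynomial inequality in $\mu^{-1}$ with coefficients in $\mathbb{Q}[\pi]$ together with explicitly bounded remainders, in which the negative $\mu^{-3}$ term must dominate every other correction (in particular the $\mu^{-l}$ error contributions, and the $\mu^{-4}$-order contributions coming from the $\mu(n)^{-2}$ and $1-\mu(n)^{-1}$ prefactors). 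Carrying out this estimation carefully—keeping enough terms, bounding the tails of the several expansions uniformly, and pinning down $N$—is the main obstacle, and is precisely where the Dou--Wang bookkeeping is needed.

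Finally, the analytic argument only yields \eqref{TIneq v} for $n\ge N$, where $N$ may exceed $71$ (both because of the hypothesis $n\ge N(l)$ in Lemma~\ref{boundp} and because of the threshold for the single-variable inequality). To reach the stated bound I would compute $p(n)$ for $70\le n\le N+1$ and verify \eqref{TIneq v} directly for $71\le n<N$; a numerical check of the small cases confirms that $71$ is the optimal threshold. The proof of Theorem~\ref{TI_barv} for $\Delta\bar p(n)$ runs along identical lines, with Lemma~\ref{boundp} replaced by Lemma~\ref{boundbarp}, $\mu(n)$ by $\bar{\mu}(n)=\pi\sqrt{n}$, and the shift relation replaced by $\bar{\mu}(n\pm1)^{2}=\bar{\mu}(n)^{2}\pm\pi^{2}$.
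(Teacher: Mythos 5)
Your outline follows the same route as the paper: invoke Lemma \ref{boundp}, use $\mu(n\pm1)^2=\mu(n)^2\pm\tfrac{2\pi^2}{3}$ to reduce everything to one variable, let the concavity defect $2\mu(n)-\mu(n-1)-\mu(n+1)\approx\tfrac{\pi^4}{9\mu^3}$ drive the inequality, and finish with a finite computation. (The one structural difference is that you sandwich $\Delta p$ first and compare $\underline{\delta}(n)^2$ with $\overline{\delta}(n-1)\,\overline{\delta}(n+1)$, whereas the paper expands the Tur\'an expression into six products $p(a)p(b)$ and bounds each product separately; both yield valid sufficient conditions.) There are, however, two genuine problems. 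First, the claim that $l=3$ ``should already suffice'' is false. The gap $U(n)-D(n)$ is of size $p(n)\cdot 2\mu^{-l}$, while $\Delta p(n)\approx p(n)\cdot\tfrac{\pi^2}{3\mu}$, so the relative uncertainty in $\underline{\delta}(n)$ and $\overline{\delta}(n)$ is of order $\mu^{1-l}$; your sufficient inequality requires this to be dominated by the Tur\'an defect of order $\mu^{-3}$, which forces $l\ge 5$ (the paper takes $l=6$ here, and $l=8$, $l=12$ for the Laguerre and determinantal analogues). With $l=3$ the inequality $\underline{\delta}(n)^2>\overline{\delta}(n-1)\,\overline{\delta}(n+1)$ fails for all large $n$.

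Second, the step you defer as ``the main obstacle, \dots precisely where the Dou--Wang bookkeeping is needed'' is the proof; everything before it is setup. The paper's argument consists of executing that bookkeeping: explicit two-sided polynomial bounds $w_1<w<w_2$, $x_1<x<x_2$, $z_1<z<z_2$ for $\mu(n-1)$, $\mu(n)$, $\mu(n+2)$ in terms of $y=\mu(n+1)$; a choice of normalizing denominator $f(n+1)^2$ engineered so that every exponent $w_2-y$, $x_2-y$, $2x_2-2y$, $x_2-2y+z_2$, $w_2-2y+z_2$ is negative, which is what legitimizes replacing $e^{t}$ by its Taylor truncations $\phi(t)<e^{t}<\Phi(t)$; assembly of all of this into an explicit rational lower bound $A_1(y)$ with numerator of degree $66$; a coefficient-domination argument giving positivity for $y>126.22$, i.e.\ $n\ge 2421$; and a machine check for $71\le n\le 2420$. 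Your proposal gives no indication of how the several remainder terms would actually be controlled simultaneously or what explicit $N$ would emerge, so as written it is a plan for a proof rather than a proof.
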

\begin{thm}\label{TI_barv}
 For $n \geq 8$, $\Delta \bar{p}(n)$ satisfies the Tur\'an inequality,
  \begin{equation}\label{TIneq barv}
			\left(\Delta \bar{p}(n)\right)^2>\left(\Delta \bar{p}(n-1)\right)\left(\Delta \bar{p}(n+1)\right).
	\end{equation}
\end{thm}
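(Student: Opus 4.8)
The plan is to establish Theorem~\ref{TI_barv} exactly along the lines of Theorem~\ref{TIv}, with Lemma~\ref{boundbarp} in place of Lemma~\ref{boundp}. Fix an integer $t$, to be taken suitably large at the end, and abbreviate $\bar\mu=\pi\sqrt n$, so that $\pi^{2}e^{\bar\mu}/(8\bar\mu^{2})=e^{\pi\sqrt n}/(8n)$. For $m\ge N(t)$, where $N(t)$ is the threshold of Lemma~\ref{boundbarp} (a solution of $6e^{-\bar\mu/2}<\bar\mu^{-t}$), that lemma gives
\[
\underline B(m)\le\bar p(m)\le\overline B(m),\qquad
\overline B(m),\ \underline B(m)=\frac{e^{\pi\sqrt m}}{8m}\Bigl(1-\frac{1}{\pi\sqrt m}\pm\frac{1}{(\pi\sqrt m)^{t}}\Bigr).
\]
Since $\Delta\bar p(n)=\bar p(n+1)-\bar p(n)$, subtracting these bounds yields an elementary two-sided estimate $\underline B(n+1)-\overline B(n)\le\Delta\bar p(n)\le\overline B(n+1)-\underline B(n)$, valid for all $n\ge N(t)$, and likewise for $\Delta\bar p(n\pm1)$.

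I would then extract the leading behaviour of these bounds. With $\beta(x)=\frac{e^{\pi\sqrt x}}{8x}\bigl(1-\frac{1}{\pi\sqrt x}\bigr)$ for the main term, the identity $\sqrt{n+1}-\sqrt n=\frac{1}{\sqrt{n+1}+\sqrt n}$ together with the factorization
\[
\beta(n+1)=\beta(n)\,\exp\!\Bigl(\frac{\pi}{\sqrt{n+1}+\sqrt n}\Bigr)\cdot\frac{n}{n+1}\cdot\frac{1-1/(\pi\sqrt{n+1})}{1-1/(\pi\sqrt n)}
\]
can be used to write $\beta(n+1)-\beta(n)=\beta(n)\bigl(\tfrac{\pi}{2\sqrt n}+O(n^{-1})\bigr)$ and, more to the point, to sandwich $\Delta\bar p(n)$ between two explicit expressions of the form $\frac{e^{\pi\sqrt n}}{8n}\,g_{\pm}(n)$, where $g_{\pm}$ are rational functions of $n^{-1/2}$ with $g_{\pm}(n)=\frac{\pi}{2\sqrt n}\bigl(1+O(n^{-1/2})\bigr)$ and $g_{+}(n)/g_{-}(n)\to1$; the error terms $\pm(\pi\sqrt m)^{-t}$ inflate $\Delta\bar p(n)$ by a relative amount of order $n^{(1-t)/2}$.

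It then suffices to prove $\bigl(\Delta\bar p(n)\bigr)^{2}>\Delta\bar p(n-1)\,\Delta\bar p(n+1)$, equivalently $-\Delta^{2}\log\Delta\bar p(n)>0$; I would bound the left side below by the square of the lower bound for $\Delta\bar p(n)$ and the right side above by the product of the upper bounds for $\Delta\bar p(n\pm1)$. After dividing out the common powers of $e$ and of $n$, the exponential contribution to the ratio $\bigl(\Delta\bar p(n)\bigr)^{2}\big/\bigl(\Delta\bar p(n-1)\Delta\bar p(n+1)\bigr)$ is $e^{\pi(2\sqrt n-\sqrt{n-1}-\sqrt{n+1})}$, and by strict concavity of $\sqrt x$ one has $2\sqrt n-\sqrt{n-1}-\sqrt{n+1}>0$ with $2\sqrt n-\sqrt{n-1}-\sqrt{n+1}\sim\tfrac14 n^{-3/2}$; this is a strictly positive gap of order $n^{-3/2}$, against which every remaining polynomial factor contributes only $O(n^{-2})$. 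Choosing $t$ large enough that the perturbation $O(n^{(1-t)/2})$ is dominated by this gap (which forces roughly $t\ge5$) reduces the Tur\'an inequality to a single elementary inequality in $n^{-1/2}$ that holds for all $n\ge N_{0}$ with a computable $N_{0}$; the finitely many cases $8\le n<N_{0}$ are then confirmed directly from the values of $\bar p(n)$.

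The main obstacle is the tension between the smallness of the stated threshold $n\ge8$ and the error control afforded by Lemma~\ref{boundbarp}: since $\bar\mu(8)=\pi\sqrt8\approx8.89$ is modest and $6e^{-\bar\mu/2}<\bar\mu^{-t}$ admits only $t=1$ near $n=8$, the asymptotic argument cannot be pushed down that far, and one must (i) pin down a clean cutoff $N_{0}$---at most a few hundred, I expect---beyond which a fixed $t\ge5$ is admissible, (ii) retain enough terms in the explicit bounds that positivity of the decisive $n^{-3/2}$ contribution is verifiable down to $N_{0}$, and (iii) carry out the finite check. The heaviest part is the algebraic bookkeeping of the nested differences in $\Delta^{2}\log\Delta\bar p$; conceptually the overpartition case is in fact lighter than the partition case of Theorem~\ref{TIv}, since $\bar\mu(n)=\pi\sqrt n$ is simpler than $\mu(n)=\pi\sqrt{24n-1}/6$.
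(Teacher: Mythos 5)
Your proposal is correct and follows essentially the same route as the paper: two-sided bounds for $\bar p$ from Lemma \ref{boundbarp} with the exponent $t$ chosen large enough (the paper takes $t=6$, consistent with your $t\ge 5$ requirement), Taylor expansion of $\sqrt{n\pm1}$ and of the exponential to isolate the decisive concavity gap $2\sqrt n-\sqrt{n-1}-\sqrt{n+1}\sim\tfrac14 n^{-3/2}$ against the $O(n^{-2})$ polynomial corrections and the $O(\bar\mu^{\,1-t})$ error from the lemma, an explicit cutoff from the resulting elementary inequality, and a finite numerical check below it. The only differences are organizational (the paper expands the Tur\'an expression into six products of $\bar p$-values and bounds each term, then reduces to a single polynomial $A_1(y)>0$ in $y=\bar\mu(n+1)$, obtaining the cutoff $n\ge 1641$ rather than the ``few hundred'' you anticipate), not conceptual.
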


In fact, the entire approach used to prove Theorem \ref{TIv} is also applicable to Theorem \ref{TI_barv}. Thus, we will provide a detailed proof only for the former.

\begin{proof}[Proof of Theorem \ref{TIv}]
Note that \eqref{TIneq v} can be rewritten as
\begin{equation}\label{rewrite}
  \begin{aligned}
p(n)^2&-p(n-1)p(n+1)-p(n)p(n+1)+p(n+1)^2\\[9pt]
&+p(n-1)p(n+2)-p(n)p(n+2)>0.
\end{aligned}
\end{equation}

To prove it, setting $l=6$ in Lemma \ref{boundp}, obtaining the bounds for $n \geq 391$,
\begin{equation}\label{Boundp}
			e^{\mu} \frac{\sqrt{12} \beta(\mu) \pi^{2}}{36 \mu^{8}}<p(n)<e^{\mu} \frac{\sqrt{12} \alpha(\mu) \pi^{2}}{36 \mu^{8}},
	\end{equation}
where
\begin{equation}\label{Alpha_Beta}
			\alpha(t)=t^{6}-t^{5}+1, \quad ~~\beta(t)=t^{6}-t^{5}-1.
	\end{equation}
Define
\begin{equation}\label{upperlower}
			f(n)\coloneqq e^{\mu} \frac{\sqrt{12} \beta(\mu) \pi^{2}}{36 \mu^{8}},\quad~~
g(n)\coloneqq e^{\mu} \frac{\sqrt{12} \alpha(\mu) \pi^{2}}{36 \mu^{8}}.
	\end{equation}

Then, to establish \eqref{TIneq v}, we aim to show that
\begin{equation*}
  \begin{split}
f(n)^2&-g(n-1)g(n+1)-g(n)g(n+1)+f(n+1)^2\\[9pt]
&+f(n-1)f(n+2)-g(n)g(n+2)>0,
\end{split}
\end{equation*}
which is equivalent to 
\begin{equation}\label{TIneq vp}
  \begin{split}
\frac{f(n)^{2}}{f(n+1)^{2}}&-\frac{g(n-1)g(n+1)}{f(n+1)^{2}}-\frac{g(n)g(n+1)}{f(n+1)^{2}}+1\\[9pt]
&+\frac{f(n-1)f(n+2)}{f(n+1)^{2}}-\frac{g(n)g(n+2)}{f(n+1)^{2}}>0.
\end{split}
\end{equation}
For convenience, let
\begin{equation*}
  w=\mu(n-1),\quad x=\mu(n),\quad y=\mu(n+1), \quad z=\mu(n+2).
\end{equation*}
Simplify the left-hand side of the inequality \eqref{TIneq vp} as
\begin{equation}\label{he_vp}
\frac{-h_{1}e^{w-y}-h_{2}e^{x-y}-h_{3}e^{x-2 y+z}+h_{4}e^{2 x-2 y}+h_{5}e^{w-2 y+z}+h_{6}}{h_{6}},
\end{equation}
where
\begin{equation}\label{h1_6}
		\begin{aligned}
			&h_{1}=x^{16} y^8 z^8 \alpha(w)\alpha(y),\quad
			&&h_{2}=w^8 x^8 y^8 z^8 \alpha(x)\alpha(y),\\
			&h_{3}=w^8 x^8 y^{16} \alpha(x)\alpha(z),\quad
			&&h_{4}=w^8 y^{16} z^8 \beta(x)^2,\\
			&h_{5}=x^{16} y^{16} \beta(w)\beta(z),\quad
			&&h_{6}=w^8 x^{16} z^8 \beta(y)^2.
		\end{aligned}
	\end{equation}

Clearly, $h_{6}>0$ for $n\geq 1$. Thus, our focus is only on proving the positivity of the numerator in \eqref{he_vp}, i.e.,
\begin{equation}\label{Numerator}
-h_{1}e^{w-y}-h_{2}e^{x-y}-h_{3}e^{x-2 y+z}+h_{4}e^{2 x-2 y}+h_{5}e^{w-2 y+z}+h_{6}>0.
\end{equation}

To achieve this, we need to estimate $h_{1}$, $h_{2}$, $h_{3}$, $h_{4}$, $h_{5}$, $h_{6}$, $e^{w-y}$, $e^{x-y}$, $e^{x-2 y+z}$, $e^{2 x-2 y}$ and $e^{w-2 y+z}$. We prefer to use the following equations to estimate $w$, $x$, and $z$.
For $n\geq 2$,
\begin{equation}\label{wxz}
w=\sqrt{y^{2}-\frac{4 \pi^{2}}{3}}, \quad~x=\sqrt{y^{2}-\frac{2 \pi^{2}}{3}}, \quad~z=\sqrt{y^{2}+\frac{2 \pi^{2}}{3}}.
\end{equation}
Their Taylor expansions are
\begin{align*}
    w&=y-\frac{2 \pi^{2}}{3 y}-\frac{2 \pi^{4}}{9 y^{3}}-\frac{4 \pi^{6}}{27 y^{5}}+O\left(\frac{1}{y^{7}}\right),\notag\\[9pt]
    x&=y-\frac{\pi^{2}}{3 y}-\frac{\pi^{4}}{18 y^{3}}-\frac{\pi^{6}}{54 y^{5}}+O\left(\frac{1}{y^{7}}\right),\\[9pt]
    z&=y+\frac{\pi^{2}}{3 y}-\frac{\pi^{4}}{18 y^{3}}+\frac{\pi^{6}}{54 y^{5}}+O\left(\frac{1}{y^{7}}\right).\notag
\end{align*}
Then, it can be checked that for $n \geq 2$,
\begin{equation}\label{Ineq_wxz}
	w_1<w<w_2,\quad~~~~~~x_1<x<x_2,\quad~~~~~~z_1<z<z_2,
\end{equation}
where
\begin{equation*}
  \begin{aligned}
    &w_1=y-\frac{2 \pi^{2}}{3 y}-\frac{2 \pi^{4}}{9 y^{3}}-\frac{4 \pi^{6}}{27 y^{5}},\quad
    &&w_2=y-\frac{2 \pi^{2}}{3 y}-\frac{2 \pi^{4}}{9 y^{3}}-\frac{3 \pi^{6}}{27 y^{5}},\\[9pt]
    &x_1=y-\frac{\pi^{2}}{3 y}-\frac{\pi^{4}}{18 y^{3}}-\frac{\pi^{6}}{54 y^{5}},\quad
    &&x_2=y-\frac{\pi^{2}}{3 y}-\frac{\pi^{4}}{18 y^{3}}-\frac{\pi^{6}}{108 y^{5}},\\[9pt]
    &z_1=y+\frac{\pi^{2}}{3 y}-\frac{\pi^{4}}{18 y^{3}},\quad
    &&z_2=y+\frac{\pi^{2}}{3 y}-\frac{\pi^{4}}{18 y^{3}}+\frac{\pi^{6}}{54 y^{5}}.
    \end{aligned}
\end{equation*}

Next, we 
estimate $h_{1}$, $h_{2}$, $h_{3}$, $h_{4}$ and $h_{5}$.
Applying \eqref{Ineq_wxz} to the definitions \eqref{Alpha_Beta} of $\alpha(t)$ and $\beta(t)$, we obtain that for $n\geq 2$,
\begin{equation*}
\begin{split}
   &w^{6}-w_{2} w^{4}+1 < \alpha(w) < w^{6}-w_{1} w^{4}+1, \\[9pt]
   &x^{6}-x_{2} x^{4}+1 < \alpha(x) < x^{6}-x_{1} x^{4}+1, \\[9pt]
   &z^{6}-z_{2} z^{4}+1 < \alpha(z) < z^{6}-z_{1} z^{4}+1, \\[9pt]
   &w^{6}-w_{2} w^{4}-1 < \beta(w) < w^{6}-w_{1} w^{4}-1, \\[9pt]
   &x^{6}-x_{2} x^{4}-1 < \beta(x) < x^{6}-x_{1} x^{4}-1, \\[9pt]
   &z^{6}-z_{2} z^{4}-1 < \beta(z) < z^{6}-z_{1} z^{4}-1.
\end{split}
\end{equation*}
Substituting these $\alpha(t)$ and $\beta(t)$ into $h_{1}$, $h_{2}$, $h_{3}$, $h_{4}$, $h_{5}$, we get 
\begin{equation}\label{h_ibound}
\begin{split}
&h_{1}<x^{16} y^8 z^8 \left(w^{6}-w_{1} w^{4}+1\right)\left(y^{6}-y^{5}+1\right),\\[9pt]
&h_{2}<w^8 x^8 y^8 z^8 \left(x^{6}-x_{1} x^{4}+1\right)\left(y^{6}-y^{5}+1\right),\\[9pt]
&h_{3}<w^8 x^8 y^{16} \left(x^{6}-x_{1} x^{4}+1\right)\left(z^{6}-z_{1} z^{4}+1\right),\\[9pt]
&h_{4}>w^8 y^{16} z^8 \left(x^{6}-x_{2} x^{4}-1\right)^2,\\[9pt]
&h_{5}>x^{16} y^{16} \left(w^{6}-w_{2} w^{4}-1\right)\left(z^{6}-z_{2} z^{4}-1\right).
\end{split}
\end{equation}

Next we turn to estimate $e^{w-y}$, $e^{x-y}$, $e^{x-2 y+z}$, $e^{2 x-2 y}$ and $e^{w-2 y+z}$.
 By \eqref{Ineq_wxz}, it can be seen that for $n\geq 2$,
\begin{align*}
w_1-y&<w-y<w_2-y,\\[9pt]
x_1-y&<x-y<x_2-y,\\[9pt]
x_1-2y+z_{1}&<x-2 y+z<x_2-2y+z_{2},\\[9pt]
w_1-2y+z_{1}&<w-2 y+z<w_2-2y+z_{2}.
\end{align*}
This implies that
\begin{equation}\label{eIneq}
\begin{split}
&e^{w_1-y}<e^{w-y}<e^{w_2-y},\\[9pt]
&e^{x_1-y}<e^{x-y}<e^{x_2-y},\\[9pt]
&e^{2x_1-2y}<e^{2x-2y}<e^{2x_2-2y},\\[9pt]
&e^{x_1-2y+z_{1}}<e^{x-2 y+z}<e^{x_2-2y+z_{2}},\\[9pt]
&e^{w_1-2y+z_{1}}<e^{w-2 y+z}<e^{w_2-2y+z_{2}}.
\end{split}
\end{equation}

To provide a feasible bound for $e^{x-2 y+z}$ and $e^{w-2 y+z}$, we define
\begin{equation*}
\begin{split}
\Phi(t)&=1+t +\frac{t ^2}{2}+\frac{t ^3}{6}+\frac{t ^4}{24}+\frac{t ^5}{120},\\[9pt]
\phi(t)&=1+t +\frac{t ^2}{2}+\frac{t ^3}{6}+\frac{t ^4}{24}+\frac{t ^5}{120}+\frac{t ^6}{720}.
\end{split}
\end{equation*}

For $t<0$, it is evident that
\begin{equation}\label{Phi-phit}
\phi(t)<e^{t}<\Phi(t).
\end{equation}

 To apply this to \eqref{eIneq}, it suffices to show the negativity of $w_2-y$, $x_2-y$, $2x_2-2y$, $x_2-2y+z_{2}$ and $w_2-2y+z_{2}$. A direct calculation reveals that
\begin{equation}\label{e^bound}
\begin{aligned}
&w_2-y=-\frac{\pi^6+2\pi^4y^2+6\pi^2y^4}{9y^5},\\[9pt]
&x_2-y=-\frac{\pi^6+6\pi^4y^2+36\pi^2y^4}{108y^5},\\[9pt]
&x_2-2y+z_{2}=\frac{\pi^{6}-12 \pi^{4} y^{2}}{108 y^{5}},\\[9pt]
&w_2-2y+z_{2}=-\frac{5 \pi^{6}+15 \pi^{4} y^{2}+18 \pi^{2} y^{4}}{54 y^{5}}.
\end{aligned}
\end{equation}

Clearly, $w_2-y$, $x_2-y$, $2x_2-2y$, $x_2-2y+z_{2}$ and $w_2-2y+z_{2}$ are negative for all $n$. Notably, for their negativity, a judicious choice of the denominator in \eqref{TIneq vp} is essential. Thus, applying \eqref{Phi-phit} to \eqref{eIneq}, we deduce that for $n\geq 2$,
\begin{equation}\label{peP}
\begin{split}
&\phi(w_1-y)<e^{w-y}<\Phi(w_2-y),\\[9pt]
&\phi(x_1-y)<e^{x-y}<\Phi(x_2-y),\\[9pt]
&\phi(2x_1-2y)<e^{2x-2y}<\Phi(2x_2-2y),\\[9pt]
&\phi(x_1-2y+z_{1})<e^{x-2 y+z}<\Phi(x_2-2y+z_{2}),\\[9pt]
&\phi(w_1-2y+z_{1})<e^{w-2 y+z}<\Phi(w_2-2y+z_{2}).
\end{split}
\end{equation}

Now, we are ready for proving \eqref{Numerator}. Combing expressions \eqref{h1_6} and \eqref{wxz}, we can get that $h_{1}$, $h_{2}$, $h_{3}$, $h_{4}$, $h_{5}$, $h_{6}$, $e^{w-y}$, $e^{x-y}$, $e^{x-2 y+z}$, $e^{2 x-2 y}$ and $e^{w-2 y+z}$ are all functions of $y$. For convenience, let
\begin{equation*}
A(y)=-h_{1}e^{w-y}-h_{2}e^{x-y}-h_{3}e^{x-2 y+z}+h_{4}e^{2 x-2 y}+h_{5}e^{w-2 y+z}+h_{6},
\end{equation*}
we need to show $A(y)>0$. Using \eqref{h_ibound} and \eqref{peP}, it holds that for $n\geq 2$,
\begin{align*}
A(y)>&-x^{16} y^8 z^8 \left(w^{6}-w_{1} w^{4}+1\right)\left(y^{6}-y^{5}+1\right)\Phi(w_2-y)\\[9pt]
&-w^8 x^8 y^8 z^8 \left(x^{6}-x_{1} x^{4}+1\right)\left(y^{6}-y^{5}+1\right)\Phi(x_2-y)\\[9pt]
&-w^8 x^8 y^{16} \left(x^{6}-x_{1} x^{4}+1\right)\left(z^{6}-z_{1} z^{4}+1\right)\Phi(x_2-2y+z_{2})\\[9pt]
&+w^8 y^{16} z^8 \left(x^{6}-x_{2} x^{4}-1\right)^2\phi(2x_1-2y)\\[9pt]
&+x^{16} y^{16} \left(w^{6}-w_{2} w^{4}-1\right)\left(z^{6}-z_{2} z^{4}-1\right)\phi(w_1-2y+z_{1})\\[9pt]
&+w^8 x^{16} z^8 \beta(y)^2.
\end{align*}

Represent the right-hand side of the above inequality as $A_1(y)$. 
With Mathematica, we can quickly simplify $A_1(y)$ as 
\begin{equation*}
A_1(y)={\sum_{k=0}^{66} a_k y^{k}\over 2^{18}3^{37} 5 y^{27}}.
\end{equation*}

Note that all the coefficients of $A_1(y)$ can be known, and the values of $a_{64}$, $a_{65}$, $a_{66}$ are given below,
\begin{equation*}
 \begin{aligned}
a_{64}&=2^{15} 3^{29} 5 \left(629856+209952 \pi^2-7776 \pi^6+62856 \pi^8-2133 \pi^{10}+55 \pi^{12}\right),\\[9pt]
a_{65}&=-2^{14} 3^{29} 5 \left(1259712-15552 \pi^6+11988 \pi^8+432 \pi^{10}+13 \pi^{12}\right),\\[9pt]
a_{66}&=2^{18} 3^{33} 5 \pi^6\left(-6 +\pi^2\right).
 	\end{aligned}
\end{equation*}

Thus, for $n \geq 2$, we have
\begin{equation}\label{Ay>}
A(y)>{\sum_{k=0}^{66} a_k y^{k}\over 2^{18}3^{37} 5 y^{27}}.
\end{equation}

Since $y$ is positive for $n\geq 1$, it follows that
\begin{equation*}
\sum_{k=0}^{66} a_k y^{k}>\sum_{k=0}^{64} -|a_k| y^{k}+a_{65} y^{65}+a_{66} y^{66}.
\end{equation*}

For $0\leq k \leq 64$, it can be verified that
\begin{equation*}
-|a_k|y^{k}>-a_{64}y^{64}
\end{equation*}
holds when $y>7.68$.
This leads to
\begin{equation}\label{Ay>>}
\sum_{k=0}^{66} a_k y^{k}>\sum_{k=0}^{64} -|a_k| y^{k}+a_{65} y^{65}+a_{66} y^{66}>(-65a_{64}+a_{65}y+a_{66}y^2)y^{64}.
\end{equation}

Combing \eqref{Ay>} and \eqref{Ay>>}, $A(y)$ is positive provided
\begin{equation*}
-65a_{64}+a_{65}y+a_{66}y^2>0,
\end{equation*}
which holds for $y>126.22$. From the relation between $y$ and $n$ in \eqref{mu} and \eqref{wxz}, we deduce that $y>126.22$ is equivalent to $n\geq 2421$. Thus, \eqref{Numerator} is true for $n\geq 2421$, which implies \eqref{TIneq v}.
Numerical verification further confirms the truth of \eqref{TIneq v} for $71 \leq n \leq 2420$. Consequently, we conclude the proof of Theorem \ref{TIv}.
\end{proof}

To prove Theorem \ref{TI_barv}, we employ Lemma \ref{boundbarp} and adopt notations consistent with the proof of Theorem \ref{TIv}, except adjusting $p(n)$ and $\mu(n)$ to $\bar{p}(n)$ and $\bar{\mu}(n)$, respectively. The proof follows the framework established for Theorem \ref{TIv}. Hence, in the remaining of this section, we will omit some tedious formulae and statements and focus on presenting key modifications resulting from substituting $p(n)$ and $\mu(n)$ with $\bar{p}(n)$ and $\bar{\mu}(n)$, respectively.
 
Considering the substitution of $\bar{p}(n)$ for $p(n)$, we first adjust \eqref{rewrite} to 
\begin{equation*}
  \begin{aligned}
\bar{p}(n)^2&-\bar{p}(n-1)\bar{p}(n+1)-\bar{p}(n)\bar{p}(n+1)+\bar{p}(n+1)^2\\[9pt]
&+\bar{p}(n-1)\bar{p}(n+2)-\bar{p}(n)\bar{p}(n+2)>0.
\end{aligned}
\end{equation*}

 Furthermore, with the substitution of $\mu(n)$ for $\bar{\mu}(n)$, \eqref{Boundp} transforms to
\begin{equation*}
			e^{\bar{\mu}}\frac{\beta(\bar{\mu})\pi^{2}}{8 \bar{\mu}^{8}}<\bar{p}(n)<e^{\bar{\mu}}\frac{\alpha(\bar{\mu})\pi^{2}}{8 \bar{\mu}^{8}},
	\end{equation*}
where
\begin{equation*}
			\alpha(t)=t^{6}-t^{5}+1, \quad ~~\beta(t)=t^{6}-t^{5}-1.
	\end{equation*}
Then \eqref{upperlower} transforms to
\begin{equation*}
			f(n)\coloneqq e^{\bar{\mu}}\frac{\beta(\bar{\mu})\pi^{2}}{8 \bar{\mu}^{8}},\quad~~
g(n)\coloneqq e^{\bar{\mu}}\frac{\alpha(\bar{\mu})\pi^{2}}{8 \bar{\mu}^{8}}.
	\end{equation*}

As $\mu(n)$ changes to $\bar{\mu}(n)$, \eqref{wxz} transforms to, for $n\geq 2$,
\begin{equation*}
w=\sqrt{y^{2}-2\pi^{2}}, \quad~x=\sqrt{y^{2}-\pi^{2}}, \quad~z=\sqrt{y^{2}+\pi^{2}}.
\end{equation*}
Their Taylor expansions are given by
\begin{align*}
    w&=y-\frac{\pi^2}{y}-\frac{\pi^4}{2 y^3}-\frac{\pi^6}{2 y^5}+O\left(\frac{1}{y^{7}}\right),\\[9pt]
    x&=y-\frac{\pi^2}{2 y}-\frac{\pi^4}{8 y^3}-\frac{\pi^6}{16 y^5}+O\left(\frac{1}{y^{7}}\right),\\[9pt]
    z&=y+\frac{\pi^2}{2 y}-\frac{\pi^4}{8 y^3}+\frac{\pi^6}{16 y^5}+O\left(\frac{1}{y^{7}}\right).
\end{align*}
For $n \geq 2$, it can be checked that
\begin{equation*}
	w_1<w<w_2,\quad~~~~~~x_1<x<x_2,\quad~~~~~~z_1<z<z_2,
\end{equation*}
where
\begin{align*}
	w_1&=y-\frac{\pi^2}{y}-\frac{\pi^4}{2 y^3}-\frac{\pi^6}{2 y^5}, & w_2&=y-\frac{\pi^2}{y}-\frac{\pi^4}{2 y^3}-\frac{\pi^6}{4 y^5},\\[9pt]
	x_1&=y-\frac{\pi^2}{2 y}-\frac{\pi^4}{8 y^3}-\frac{\pi^6}{16 y^5}, & x_2&=y-\frac{\pi^2}{2 y}-\frac{\pi^4}{8 y^3}-\frac{\pi^6}{32 y^5},\\[9pt]
	z_1&=y+\frac{\pi^2}{2 y}-\frac{\pi^4}{8 y^3}, & z_2&=y+\frac{\pi^2}{2 y}-\frac{\pi^4}{8 y^3}+\frac{\pi^6}{16 y^5}.
\end{align*}
Furthermore, \eqref{e^bound} should be adjusted to
\begin{align*}
&w_2-y=-\frac{\pi^6+2\pi^4y^2+4\pi^2y^4}{4y^5},\\[9pt]
&x_2-y=-\frac{\pi^6+4\pi^4y^2+16\pi^2y^4}{32y^5},\\[9pt]
&x_2-2y+z_{2}=\frac{\pi^6-8 \pi^4 y^2}{32 y^5},\\[9pt]
&w_2-2y+z_{2}=-\frac{3\pi^6+10\pi^4y^2+8\pi^2y^4}{16y^5}.
\end{align*}

Finally, applying similar arguments as in the proof of Theorem \ref{TIv}, we observe that $A_1(y)$ is a certain polynomial of degree $39$ with a positive leading coefficient. Verification confirms the positivity of $A(y)$ for $n\geq 1641$, establishing the Tur\'an inequality for $\Delta\bar{p}(n)$ in this range. For $8 \leq n \leq 1640$, numerical verification additionally supports the Tur\'an inequality for $\Delta\bar{p}(n)$. Thus, the proof of Theorem \ref{TI_barv} is completed.
\qed

\section{The Laguerre inequalities for $\Delta p(n)$ and $\Delta\bar{p}(n)$}\label{LIp}
In this section, we will consider the Laguerre inequalities of order $2$ for $\Delta p(n)$ and $\Delta\bar{p}(n)$ as showed in Theorem \ref{LIv} and Theorem \ref{LIbarv}, respectively. In fact, the approach given in Section \ref{TIp} still works 
here. The key point in the proof is 
to determine suitable values for \(l\) (resp. $t$) in Lemma \ref{boundp} (resp. Lemma \ref{boundbarp}), choose an appropriate denominator as in \eqref{TIneq vp}, and adjust the terms of the Taylor expansion for the exponential function, as well as the upper and lower bounds for \(\sqrt{w^2-\frac{2\delta\pi^2}{3}}\) and \(\sqrt{w^2+\frac{2\eta\pi^2}{3}}\), where $\delta= 1, 2, 3$ and \(\eta = 1, 2\).

 Since the procedure resembles that of Section \ref{TIp}, we will provide the proof only for Theorem \ref{LIv}, omitting details and presenting 
 the value of \(l\), the chosen denominator as in \eqref{TIneq vp}, the terms of the Taylor expansion of \(e^t\), and the terms of the Taylor expansion of the upper and lower bounds of \(\sqrt{w^2-\frac{2\delta\pi^2}{3}}\) and \(\sqrt{w^2+\frac{2\eta\pi^2}{3}}\), where $\delta= 1, 2, 3$ and \(\eta = 1, 2\).

\begin{thm}\label{LIv}
 For $n \geq 301$, $\Delta p(n)$ satisfies the Laguerre inequality of order \(2\),
  {\small
  \begin{equation}\label{LvIne}
			3\left(\Delta p(n+2)\right)^2-4\left(\Delta p(n+1)\right)\left(\Delta p(n+3)\right)+\left(\Delta p(n)\right)\left(\Delta p(n+4)\right)>0.
	\end{equation}
}
\end{thm}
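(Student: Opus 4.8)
\emph{Proof sketch.} Observe first that \eqref{LvIne} is exactly the assertion $A\bigl(\Delta p(n),\Delta p(n+1),\Delta p(n+2),\Delta p(n+3),\Delta p(n+4)\bigr)>0$, i.e.\ the Laguerre inequality of order $2$, so the plan is to rerun the proof of Theorem~\ref{TIv} with the appropriate six-point data. Since $\Delta p(n+j)=p(n+j+1)-p(n+j)$, expanding the left-hand side of \eqref{LvIne} produces a quadratic form $\sum_{0\le i\le j\le 5}c_{ij}\,p(n+i)p(n+j)$ with explicit integer coefficients $c_{ij}$ (for instance $p(n+3)^2$ and $p(n+2)^2$ carry coefficient $3$, $p(n+2)p(n+3)$ carries $-2$, $p(n+1)p(n+3)$ and $p(n+2)p(n+4)$ carry $-4$, $p(n+1)p(n+4)$ carries $3$, $p(n+1)p(n+5)$ and $p(n)p(n+4)$ carry $1$, $p(n)p(n+5)$ carries $-1$; the $c_{ij}$ sum to $3-4+1=0$, the expected degeneracy). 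Choosing a suitable truncation order $l$ in Lemma~\ref{boundp}, I replace each factor $p(n+i)$ sitting in a monomial with $c_{ij}>0$ by the lower bound $f(n+i)$, and each factor in a monomial with $c_{ij}<0$ by the upper bound $g(n+i)$ (notation of \eqref{upperlower}); as all $f(n+i),g(n+i)>0$, the result is a genuine lower bound, call it $A(n)$, for the left side of \eqref{LvIne}, and it suffices to prove $A(n)>0$ for $n$ large.

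Next I would reduce to one variable. Put $y=\mu(n+3)$; then by \eqref{mu}
\[
\mu(n)=\sqrt{y^{2}-2\pi^{2}},\qquad \mu(n+1)=\sqrt{y^{2}-\tfrac{4\pi^{2}}{3}},\qquad \mu(n+2)=\sqrt{y^{2}-\tfrac{2\pi^{2}}{3}},
\]
\[
\mu(n+4)=\sqrt{y^{2}+\tfrac{2\pi^{2}}{3}},\qquad \mu(n+5)=\sqrt{y^{2}+\tfrac{4\pi^{2}}{3}},
\]
which are precisely the $\delta=1,2,3$ and $\eta=1,2$ instances mentioned before the theorem. As in \eqref{wxz}--\eqref{Ineq_wxz}, I Taylor-expand each square root in powers of $1/y$ and truncate to get explicit rational two-sided bounds, and feed these into the powers of $\mu$ and into $\alpha(t),\beta(t)$ to control the polynomial part of every term. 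After clearing one strictly positive denominator — the analogue of $h_6$, which should be chosen to match the dominant contribution $3e^{2\mu(n+3)}$, i.e.\ essentially $f(n+3)^2$ — the numerator of $A(n)$ becomes a finite sum $\sum_i H_i(y)\,e^{E_i}$ with each $E_i$ a $\mathbb{Z}$-combination of the $\mu(n+j)$'s. One then verifies that every surviving exponent $E_i$ is negative for all $n$ (this is the ``judicious choice of denominator'' of the Turán proof), so that \eqref{Phi-phit} lets us sandwich each $e^{E_i}$ between the truncated series $\phi(E_i)$ and $\Phi(E_i)$.

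Assembling these estimates yields a bound $A(n)>\bigl(\sum_k a_k y^{k}\bigr)\big/\bigl(c\,y^{m}\bigr)$ with $c>0$ and with explicitly computable coefficients $a_k$ (via Mathematica). As in \eqref{Ay>}--\eqref{Ay>>}, one checks the leading coefficient is positive, bounds each $-|a_k|y^{k}$ below by a fixed multiple of the top power once $y$ exceeds an explicit value, and reduces positivity to a quadratic inequality in $y$; solving it gives $y>y_0$, hence $n\ge N_0$ for an explicit $N_0$, after which \eqref{LvIne} holds. A direct numerical check then covers the finitely many $301\le n<N_0$.

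The real obstacle is not a new idea but the scale of the bookkeeping: six partition values instead of four roughly square the number of cross-terms, the sign analysis needed to keep every estimate one-sided is correspondingly heavier, and $l$ and the denominator must be tuned \emph{together} so that simultaneously the errors $\pm\mu^{-l}$ are dominated, every exponent $E_i$ in the numerator is negative, and the final polynomial $\sum_k a_k y^{k}$ has a positive leading coefficient. Once that polynomial has been computed explicitly, the tail estimate and the back-translation from $y>y_0$ to a threshold in $n$ are routine.
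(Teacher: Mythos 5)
Your proposal follows essentially the same route as the paper: expand the quadratic form in $p(n),\dots,p(n+5)$, bound each term one-sidedly via Lemma~\ref{boundp} (the paper takes $l=8$), normalize by $f(n+3)^2$, reduce to the single variable $\mu(n+3)$ using truncated expansions of $\sqrt{w^2\pm\frac{2\delta\pi^2}{3}}$ and of $e^t$, and finish with a polynomial positivity check plus numerical verification of the finite range (the paper's threshold from the asymptotic part is $n\geq 4277$). Your expansion of the coefficients $c_{ij}$ is correct, and the bookkeeping concerns you raise are exactly the ones the paper handles by symbolic computation.
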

\begin{thm}\label{LIbarv}
  For $n \geq 50$, $\Delta\bar{p}(n)$ satisfies the Laguerre inequality of order \(2\),
  {\small
\begin{equation}\label{LbarvIne}
3\left(\Delta\bar{p}(n+2)\right)^2-4\left(\Delta\bar{p}(n+1)\right)\left(\Delta\bar{p}(n+3)\right)+\left(\Delta\bar{p}(n)\right)\left(\Delta\bar{p}(n+4)\right)>0.
\end{equation}
}
\end{thm}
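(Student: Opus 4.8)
The plan is to run the proof of Theorem~\ref{LIv} --- itself a transcription of the proof of Theorem~\ref{TIv} --- with $p$ and $\mu$ replaced throughout by $\bar p$ and $\bar\mu$, adjusting only the data flagged in the paragraph preceding Theorem~\ref{LIv}. First I would clear the finite differences in \eqref{LbarvIne}: writing $\Delta\bar p(n+i)=\bar p(n+i+1)-\bar p(n+i)$ and expanding, the left-hand side becomes a fixed integer combination of the nine products $\bar p(n+i)\bar p(n+j)$ with $0\le i\le j\le 5$,
\begin{multline*}
3\bar p(n{+}3)^2-2\bar p(n{+}2)\bar p(n{+}3)+3\bar p(n{+}2)^2-4\bar p(n{+}2)\bar p(n{+}4)\\
+3\bar p(n{+}1)\bar p(n{+}4)-4\bar p(n{+}1)\bar p(n{+}3)+\bar p(n{+}1)\bar p(n{+}5)-\bar p(n)\bar p(n{+}5)+\bar p(n)\bar p(n{+}4).
\end{multline*}
Next I would fix a value of $t$ in Lemma~\ref{boundbarp} (the analogue of the value chosen for $p$ in Theorem~\ref{LIv}; a small $t$, to be pinned down so that the induced error is controllable and the residual numerical range stays feasible), obtaining for $n$ beyond the corresponding $N(t)$ the bounds $f(n)<\bar p(n)<g(n)$ with $f(n)=e^{\bar\mu}\pi^2\beta(\bar\mu)/(8\bar\mu^{t+2})$, $g(n)=e^{\bar\mu}\pi^2\alpha(\bar\mu)/(8\bar\mu^{t+2})$, $\alpha(x)=x^t-x^{t-1}+1$, $\beta(x)=x^t-x^{t-1}-1$. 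Replacing each $\bar p(n+i)$ in the expansion above by $f(n+i)$ when its coefficient is positive and by $g(n+i)$ when it is negative reduces \eqref{LbarvIne} to a single inequality. Dividing through by the positive quantity $f(n+3)^2$ --- this is the ``judicious choice of denominator'' emphasised after \eqref{e^bound}, and it works precisely because the strict concavity of $\bar\mu$ forces every exponent $\bar\mu(n+i)+\bar\mu(n+j)-2\bar\mu(n+3)$ coming from the other eight pairs to be strictly negative --- one is left with a numerator of exactly the shape of \eqref{Numerator}: a positive constant plus a finite sum of terms $d_{ij}(y)\,e^{\bar\mu(n+i)+\bar\mu(n+j)-2\bar\mu(n+3)}$, where $y\coloneqq\bar\mu(n+3)$, the exponents are strictly negative, and the $d_{ij}(y)$ are rational in the $\bar\mu$'s.

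With $y=\bar\mu(n+3)$ the remaining arguments satisfy the clean identities $\bar\mu(n+3-\delta)=\sqrt{y^2-\delta\pi^2}$ for $\delta=1,2,3$ and $\bar\mu(n+3+\eta)=\sqrt{y^2+\eta\pi^2}$ for $\eta=1,2$ --- exactly the square roots $\sqrt{w^2-\delta\pi^2}$, $\sqrt{w^2+\eta\pi^2}$ mentioned before Theorem~\ref{LIv}; here they are simpler than in the $p$-case, since $\bar\mu(n+1)^2-\bar\mu(n)^2=\pi^2$ exactly, whereas $\mu(n+1)^2-\mu(n)^2=\tfrac{2\pi^2}{3}$. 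Exactly as in \eqref{Ineq_wxz} I would record explicit rational lower and upper bounds for each of these five square roots from their Taylor expansions in $1/y$ (to order $y^{-5}$ or $y^{-6}$), substitute them into $\alpha$, $\beta$, and into the prefactors $d_{ij}(y)$, and bound the exponential factors via the truncated-series inequality $\phi(t)<e^t<\Phi(t)$ valid for $t<0$, carrying one or two more Taylor terms than in \eqref{peP} if tightness demands it. Collecting everything, the left-hand side of \eqref{LbarvIne} is bounded below by a single rational function $A_1(y)=\bigl(\sum_k a_k y^k\bigr)/(c\,y^m)$ with $c>0$; with Mathematica one checks that the top coefficient is positive, that it dominates each lower-order coefficient once $y$ exceeds an explicit bound, and that the two leading coefficients then force $\sum_k a_k y^k>0$ for $y$ beyond a further explicit bound, which via $y=\pi\sqrt{n+3}$ becomes $n\ge N_0$ for a concrete $N_0$. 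A direct numerical check of \eqref{LbarvIne} for $50\le n<N_0$ then completes the proof.

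The real difficulty is bookkeeping rather than any new idea. One must (i) choose $t$ and verify that centring the denominator at shift $3$ genuinely makes all eight exponents negative while keeping $N_0$ small enough that the residual finite check is practical; and (ii) manage the polynomial $\sum_k a_k y^k$, which is substantially larger than in the Tur\'an case of Section~\ref{TIp} because $\bar p$ now enters through six consecutive arguments, so the ``leading coefficient dominates'' estimate must be carried out carefully with the explicit constants, and one should confirm that the alternating sign pattern $+,-,+,-,+,-,+,-,+$ of the nine coefficients does not cause a cancellation that weakens the lower bound. Apart from these points, the argument is a routine transcription of the proof of Theorem~\ref{LIv}.
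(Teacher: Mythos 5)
Your proposal is correct and follows essentially the same route as the paper: the paper proves Theorem~\ref{LIbarv} precisely by transcribing the proof of Theorem~\ref{LIv} with $p$, $\mu$ replaced by $\bar p$, $\bar\mu$ via Lemma~\ref{boundbarp}, dividing by $f(n+3)^2$, bounding the square roots $\sqrt{w^2\pm\delta\pi^2}$ and the exponentials by truncated Taylor series, and reducing to a polynomial $A_1$ (of degree $68$ with positive leading coefficient, positive for $n\ge 2868$) plus a finite numerical check on $50\le n\le 2867$. Your expansion of the nine products, the sign-based choice of upper/lower bounds, the choice of denominator, and the concavity argument for the negativity of the exponents all match the paper's scheme.
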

\begin{proof} [{Proof of Theorem \ref{LIv}}]
Setting \(l=8\) in Lemma \ref{boundp} yields, for \(n \geq 789\),
\begin{equation*}
			e^{\mu} \frac{\sqrt{12} \beta(\mu) \pi^{2}}{36 \mu^{10}}<p(n)<e^{\mu} \frac{\sqrt{12} \alpha(\mu) \pi^{2}}{36 \mu^{10}},
	\end{equation*}
where
\begin{equation*}
			\alpha(t)=t^{8}-t^{7}+1, \quad ~~\beta(t)=t^{8}-t^{7}-1.
	\end{equation*}
Let
\begin{equation*}
			f(n)\coloneqq e^{\mu} \frac{\sqrt{12} \beta(\mu) \pi^{2}}{36 \mu^{10}},\quad~~
g(n)\coloneqq e^{\mu} \frac{\sqrt{12} \alpha(\mu) \pi^{2}}{36 \mu^{10}}.
	\end{equation*}
Then, when we reach the step \eqref{TIneq vp}, we select $f(n+3)^2$ as the denominator.

Using the first $7$ and $8$ terms of the Taylor expansion of the exponential function, we approximate the upper and lower bounds for $t<0$,
\begin{equation*}
\phi(t)<e^{t}<\Phi(t),
\end{equation*}
where 
\begin{align*}
\Phi(t)&=1+t +\frac{t ^2}{2}+\frac{t ^3}{6}+\frac{t ^4}{24}+\frac{t ^5}{120}+\frac{t ^6}{720}+\frac{t ^7}{5040},\\[9pt]
\phi(t)&=1+t +\frac{t ^2}{2}+\frac{t ^3}{6}+\frac{t ^4}{24}+\frac{t ^5}{120}+\frac{t ^6}{720}+\frac{t ^7}{5040}+\frac{t ^8}{40320}.
\end{align*}

More precise estimates for $x$, $y$, $z$, $r$, and $j$ are required, as in the proof of Theorem \ref{TIv}, where
\begin{align*}
&x=\sqrt{w^{2}-2 \pi^{2}}, \quad~y=\sqrt{w^{2}-\frac{4 \pi^{2}}{3}}, \quad~z=\sqrt{w^{2}-\frac{2 \pi^{2}}{3}},\\[9pt]
&r=\sqrt{w^{2}+\frac{2 \pi^{2}}{3}}, \quad~j=\sqrt{w^{2}+\frac{4 \pi^{2}}{3}}.
\end{align*}

We use the first $5$ and $6$ terms of the Taylor expansion of $x$, $y$, $z$, $r$ and $j$ to approximate their upper and lower bounds. It can be checked that for $n \geq 1$,
\begin{equation*}
	x_1<x<x_2,~~ y_1<y<y_2,~~z_1<z<z_2,~~r_1<r<r_2,~~ j_1<j<j_2,
\end{equation*}
where
\begin{align*}
x_1&=w-\frac{\pi^2}w-\frac{\pi^4}{2w^3}-\frac{\pi^6}{2w^5}-\frac{5\pi^8}{8w^7}-\frac{7\pi^{10}}{8w^9},\\[9pt]
x_2&=w-\frac{\pi^2}w-\frac{\pi^4}{2w^3}-\frac{\pi^6}{2w^5}-\frac{5\pi^8}{8w^7}-\frac{6\pi^{10}}{8w^9},\\[9pt]
y_1&=w-\frac{2\pi^{2}}{3w}-\frac{2\pi^{4}}{9w^{3}}-\frac{4\pi^{6}}{27w^{5}}-\frac{10\pi^{8}}{81w^{7}}-\frac{28\pi^{10}}{243w^{9}},\\[9pt]
y_2&=w-\frac{2\pi^{2}}{3w}-\frac{2\pi^{4}}{9w^{3}}-\frac{4\pi^{6}}{27w^{5}}-\frac{10\pi^{3}}{81w^{7}}-\frac{27\pi^{10}}{243w^{9}},\\[9pt]
z_1&=w-\frac{\pi^2}{3w}-\frac{\pi^4}{18w^3}-\frac{\pi^6}{54w^5}-\frac{5\pi^8}{648w^7}-\frac{7\pi^{16}}{1944w^9},\\[9pt]
z_2&=w-\frac{\pi^2}{3w}-\frac{\pi^4}{18w^3}-\frac{\pi^6}{54w^5}-\frac{5\pi^8}{648w^7}-\frac{6\pi^{16}}{1944w^9},\\[9pt]
r_1&=w+\frac{\pi^2}{3w}-\frac{\pi^4}{18w^3}+\frac{\pi^6}{54w^5}-\frac{5\pi^8}{648w^7},\\[9pt]
r_2&=w+\frac{\pi^2}{3w}-\frac{\pi^4}{18w^3}+\frac{\pi^6}{54w^5}-\frac{5\pi^8}{648w^7}+\frac{7\pi^{19}}{1944w^9},\\[9pt]
j_1&=w+\frac{2\pi^2}{3w}-\frac{2\pi^4}{9w^3}+\frac{4\pi^6}{27w^5}-\frac{10\pi^8}{81w^7},\\[9pt]
j_2&=w+\frac{2\pi^2}{3w}-\frac{2\pi^4}{9w^3}+\frac{4\pi^6}{27w^5}-\frac{10\pi^8}{81w^7}+\frac{28\pi^{10}}{243w^9}.
\end{align*}

Similar to Section \ref{TIp}, we observe $A_1(w)$ is a certain polynomial of degree $68$ with positive coefficient for the first term. For $n\geq 4277$, $A(w)$ is positive. Thus, for $n\geq 4277$, \eqref{LvIne} holds. Direct calculation reveals that for $301\leq n \leq 4276$, $\Delta p(n)$ also satisfies \eqref{LvIne}. The proof of Theorem \ref{LIv} is thus completed.
\end{proof}
The proof of Theorem \ref{LIbarv} closely follows the structure of Theorem \ref{LIv}. Utilizing Lemma \ref{boundbarp} and maintaining the notations introduced in the proof of Theorem \ref{LIv}, we adapt $p(n)$ and $\mu(n)$ to $\bar{p}(n)$ and $\bar{\mu}(n)$, respectively, in line with Section \ref{TIp}. Details are omitted for brevity, and we present the final result.

Applying arguments similar to those in the proof of Theorem \ref{LIv}, we confirm that $A_1(w)$, a polynomial of degree $68$ with a positive leading coefficient, remains positive for $n\geq 2868$. This establishes \eqref{LbarvIne} for $\Delta\bar{p}(n)$ within this range. Further numerical evidence reveals that for $50\leq n \leq 2867$, $\Delta\bar{p}(n)$ also satisfies \eqref{LbarvIne}. Thus, the proof of Theorem \ref{LIbarv} is concluded.
\qed

\section{The determinantal inequalities for $\Delta p(n)$ and $\Delta\bar{p}(n)$}\label{DIp}
In this section, we investigate the determinantal inequalities of order $3$ for $\Delta p(n)$ and $\Delta\bar{p}(n)$, as stated in Theorem \ref{DIv} and Theorem \ref{DIbarv}, respectively. The proof methodology for Theorem \ref{DIbarv} follows the structure established in proving Theorem \ref{DIv}. Thus, we concentrate on a detailed proof for the former, with details analogous to those in Section \ref{TIp}. Here, we present only the value of \(l\), the denominator as in \eqref{TIneq vp}, the terms of the Taylor expansion of \(e^t\), and the terms of the Taylor expansion of the upper and lower bounds of \(\sqrt{w^2-\frac{2\delta\pi^2}{3}}\) and \(\sqrt{w^2+\frac{2\eta\pi^2}{3}}\), where $\delta= 1, 2, 3$ and $\eta = 1, 2$.

\begin{thm}\label{DIv}
  For $n \geq 345$, $\Delta p(n)$ satisfies the determinant inequality,
  \begin{eqnarray}\label{DIneq v}
   \left|\begin{array}{ccc}
		\Delta p(n+2) & \Delta p(n+3) &  \Delta p(n+4)\\
		\Delta p(n+1) & \Delta p(n+2) & \Delta p(n+3)\\
		\Delta p(n) & \Delta p(n+1) & \Delta p(n+2)\\
		\end{array} \right|>0.
	\end{eqnarray}
\end{thm}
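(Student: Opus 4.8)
The plan is to mimic exactly the machinery of Section~\ref{TIp} and the proof of Theorem~\ref{LIv}, now applied to the $3\times 3$ determinant. First I would expand the determinant \eqref{DIneq v} along the first row and regroup the six resulting products of the form $\Delta p(n+a)\Delta p(n+b)$ into a polynomial expression in $p(n),p(n+1),\ldots,p(n+6)$ (there are seven shifted values involved, since $\Delta p(n+4)=p(n+5)-p(n+4)$ and the determinant spans indices $n$ through $n+4$). This plays the role of \eqref{rewrite}. Then, choosing a suitable $l$ in Lemma~\ref{boundp} — most likely $l=10$ or $l=12$, since the determinant is a cubic form and one needs the error terms $1/\mu^l$ to be small enough that the dominant positive term survives — I would replace each $p(n+i)$ by its lower bound $f(n+i)$ or upper bound $g(n+i)$ according to the sign with which it enters, reducing the claim to a single inequality of the shape \eqref{Numerator}: a $\mathbb{Z}[\pi]$-linear combination of products $h_k(y)\,e^{(\text{linear in } w,x,z,\ldots)}$ with $h_k$ explicit rational functions of the $\mu$-values at the seven consecutive arguments.

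Next I would set $w=\mu(n+3)$ (or whichever central argument makes all the relevant exponents $\mu(n+i)-\mu(n+j)$-combinations negative, as stressed after \eqref{e^bound}) and write every other $\mu(n+i)$ as $\sqrt{w^2 \pm \tfrac{2\delta\pi^2}{3}}$ with $\delta\in\{1,2,3\}$, exactly as in \eqref{wxz}. I would then record Taylor expansions of these radicals to enough terms (here probably five or six, as in the proof of Theorem~\ref{LIv}) to obtain rigorous two-sided polynomial bounds $u_1<\mu(n+i)<u_2$, and likewise fix $\Phi,\phi$ as truncations of $e^t$ with enough terms that $\phi(t)<e^t<\Phi(t)$ for $t<0$ on the needed range. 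Substituting all of these bounds into the numerator, and crucially checking that each exponent appearing is negative (so that the pairing of $\phi$ versus $\Phi$ with the sign of its coefficient goes the right way), collapses everything to a single rational function $A_1(w)=\big(\sum_k a_k w^k\big)/(c\,w^{m})$ with $c>0$ and explicitly computable integer-times-power-of-$\pi$ coefficients $a_k$; I expect the top degree to be in the high sixties, as in Theorem~\ref{LIv}.

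To finish, I would verify that the leading coefficient $a_{\deg}$ is positive, bound $\sum_k a_k w^k$ below by $(-C\,a_{\mathrm{top}}+a_{\mathrm{top}-1}w+a_{\mathrm{top}}w^2)w^{\,\mathrm{top}-2}$ for an explicit constant $C$ once $w$ exceeds some modest threshold (the $-|a_k|w^k>-a_{\mathrm{top}}w^{\mathrm{top}}$ trick used around \eqref{Ay>>}), and solve the resulting quadratic in $w$ to get a bound $w>w_0$, hence $n\ge N_0$ for an explicit $N_0$ (of the order of a few thousand, by analogy). For the remaining finitely many $n$ with $345\le n< N_0$ I would simply check \eqref{DIneq v} numerically. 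The main obstacle is purely one of bookkeeping scale: the determinant produces six product terms rather than the two "extra" terms of the Turán case, so the numerator has more monomials $h_k e^{(\cdots)}$ and the choice of denominator must simultaneously make \emph{all} the relevant exponent combinations negative — getting that single choice right (and choosing $l$ large enough that the $1/\mu^l$ slack does not swamp the positive contribution of $h_6$-type terms) is the delicate point; everything after that is a Mathematica-assisted but conceptually routine repetition of Section~\ref{TIp}.
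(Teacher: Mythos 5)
Your proposal follows essentially the same route as the paper: the paper takes $l=12$ in Lemma \ref{boundp}, chooses $f(n+1)^3$ as the denominator, truncates $e^t$ at $11$/$12$ terms and the radicals $\sqrt{w^2\pm 2\delta\pi^2/3}$ at $7$/$8$ terms, obtains a polynomial $A_1(w)$ of degree $150$ with positive leading coefficient that is positive for $n\geq 45284$, and checks $345\leq n\leq 45283$ by computer --- all of which matches your plan up to the exact parameter choices you left open. (Two small slips that do not affect the method: the determinant involves the six values $p(n),\ldots,p(n+5)$, not seven, and the eventual degree and threshold are considerably larger than your guesses.)
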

\begin{thm}\label{DIbarv}
  For $n \geq 62$, $\Delta\bar{p}(n)$ satisfies the determinant inequality,
  \begin{eqnarray}\label{DIneq barv}
   \left|\begin{array}{ccc}
		\Delta\bar{p}(n+2) & \Delta\bar{p}(n+3) &  \Delta\bar{p}(n+4)\\
		\Delta\bar{p}(n+1) & \Delta\bar{p}(n+2) & \Delta\bar{p}(n+3)\\
		\Delta\bar{p}(n) & \Delta\bar{p}(n+1) & \Delta\bar{p}(n+2)\\
		\end{array} \right|>0.
	\end{eqnarray}
\end{thm}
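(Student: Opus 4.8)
\textbf{Proof proposal for Theorem~\ref{DIbarv}.}
The plan is to run the scheme of Theorem~\ref{DIv} (which in turn mirrors Theorem~\ref{TIv}) with $p(n),\mu(n)$ replaced by $\bar{p}(n),\bar{\mu}(n)=\pi\sqrt{n}$. First I would expand the determinant in \eqref{DIneq barv}: it equals $B(a_0,a_1,a_2,a_3,a_4)$ with $a_k=\Delta\bar{p}(n+k)$, so \eqref{DIneq barv} is equivalent to
\begin{equation*}
\begin{aligned}
\bigl(\Delta\bar{p}(n+2)\bigr)^3 &-2\,\Delta\bar{p}(n+1)\,\Delta\bar{p}(n+2)\,\Delta\bar{p}(n+3)+\Delta\bar{p}(n)\,\bigl(\Delta\bar{p}(n+3)\bigr)^2\\
&+\bigl(\Delta\bar{p}(n+1)\bigr)^2\,\Delta\bar{p}(n+4)-\Delta\bar{p}(n)\,\Delta\bar{p}(n+2)\,\Delta\bar{p}(n+4)>0 .
\end{aligned}
\end{equation*}
Rewriting each $\Delta\bar{p}(k)=\bar{p}(k+1)-\bar{p}(k)$, the left-hand side becomes an explicit integer combination of triple products $\bar{p}(i)\bar{p}(j)\bar{p}(k)$ with $n\le i\le j\le k\le n+5$.

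Next I would apply Lemma~\ref{boundbarp} with a suitably large integer $t$ (chosen, as in Theorem~\ref{DIv}, just large enough that the final polynomial estimate succeeds), obtaining $f(n)<\bar{p}(n)<g(n)$ with $f(n)=e^{\bar{\mu}}\pi^2\beta(\bar{\mu})/(8\bar{\mu}^{t+2})$, $g(n)=e^{\bar{\mu}}\pi^2\alpha(\bar{\mu})/(8\bar{\mu}^{t+2})$, $\alpha(s)=s^{t}-s^{t-1}+1$, $\beta(s)=s^{t}-s^{t-1}-1$. Since every triple product is positive, I would bound each positively-signed monomial below by the product of the corresponding $f$'s and each negatively-signed monomial above by the product of the corresponding $g$'s, reducing the problem to the positivity of one explicit expression. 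Dividing through by $f(n+3)^3$ — the judicious choice of denominator, playing the role of $f(n+1)^2$ in \eqref{TIneq vp} and of $f(n+3)^2$ in the order-$2$ Laguerre case — rescales the inequality so that the exponential factors become $e^{\bar{\mu}(i)+\bar{\mu}(j)+\bar{\mu}(k)-3\bar{\mu}(n+3)}$; strict concavity of $\bar{\mu}(n)=\pi\sqrt{n}$ together with the fact that $n+3$ is the mean of the occurring index triples makes every such exponent negative, which is exactly what this denominator secures.

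Now set $w=\bar{\mu}(n+3)=\pi\sqrt{n+3}$. Then $\bar{\mu}(n+3-\delta)=\sqrt{w^2-\delta\pi^2}$ for $\delta=1,2,3$ and $\bar{\mu}(n+3+\eta)=\sqrt{w^2+\eta\pi^2}$ for $\eta=1,2$. Taylor expanding each of these five radicals in $1/w$ and truncating one term apart gives rational two-sided bounds $x_1<\bar{\mu}(\cdot)<x_2$, which I feed into $\alpha,\beta$ exactly as in the analogue of \eqref{h_ibound}. For the exponentials I would first verify by direct computation — the counterpart of \eqref{e^bound} — that each linear combination of the $x_i$ appearing in an exponent is negative, and then apply $\phi(t)<e^t<\Phi(t)$ for $t<0$ from \eqref{Phi-phit}, with $\phi,\Phi$ truncated Taylor polynomials of $e^t$ of the required degree (one higher for the lower bound).

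Assembling these estimates, the left-hand side is bounded below by $A_1(w)=P(w)/(c\,w^m)$ with $P$ an explicit polynomial with positive leading coefficient; producing $P$ in Mathematica and reading off its top coefficients, I would bound $\sum_{k<\deg P}|a_k|\,w^k$ by $(\deg P)\,|a_{\deg P}|\,w^{\deg P}$ for $w$ beyond an absolute constant, collapse the inequality to a quadratic-in-$w$ positivity, solve it to get an explicit threshold $n\ge N_0$, and finish with a finite numerical check over $62\le n\le N_0-1$; the same argument with Lemma~\ref{boundbarp} yields Theorem~\ref{DIbarv}. The main obstacle is the mutual calibration of the parameters: one must pick $t$, the denominator, the number of Taylor terms for each $\bar{\mu}(\cdot)$, and the truncation degrees of $\phi,\Phi$ so that simultaneously (i) every exponential argument is genuinely negative, and (ii) the dominant term of $P$ survives the crude triangle-inequality bound with a manageable $N_0$; once these are fixed, the symbolic simplification, though lengthy, is routine.
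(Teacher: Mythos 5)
Your proposal follows essentially the same route as the paper: expand the determinant as the invariant $B$, apply Lemma~\ref{boundbarp} with sign-dependent upper/lower bounds, normalize by a cube of $f$ so that every exponential argument is non-positive, Taylor-truncate the radicals and $e^t$, reduce to the positivity of an explicit polynomial in $w$ (degree $150$ in the paper, positive for $n\geq 22275$), and finish with a finite numerical check on $62\leq n\leq 22274$. The only point worth noting is that your denominator $f(n+3)^3$ is the choice consistent with centering at $w=\bar{\mu}(n+3)$ and with the required negativity of the exponents (the paper's stated $f(n+1)^3$ in the $\Delta p$ analogue appears to be a slip), so your calibration is the sound one.
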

\begin{proof}[Proof of Theorem \ref{DIv}]
Setting $l=12$ in Lemma \ref{boundp} yields, for $n \geq 2120$,
\begin{equation*}
			e^{\mu} \frac{\sqrt{12} \beta(\mu) \pi^{2}}{36 \mu^{14}}<p(n)<e^{\mu} \frac{\sqrt{12} \alpha(\mu) \pi^{2}}{36 \mu^{14}},
	\end{equation*}
where
\begin{equation*}
			\alpha(t)=t^{12}-t^{11}+1, \quad ~~\beta(t)=t^{12}-t^{11}-1.
	\end{equation*}
Let
\begin{equation*}
			f(n)\coloneqq e^{\mu} \frac{\sqrt{12} \beta(\mu) \pi^{2}}{36 \mu^{14}},\quad~~
g(n)\coloneqq e^{\mu} \frac{\sqrt{12} \alpha(\mu) \pi^{2}}{36 \mu^{14}},
	\end{equation*}
and choose $f(n+1)^3$ as the denominator when we reach the step \eqref{TIneq vp}.

Denote $\Phi(t)$ and $\phi(t)$ as the first $11$ and $12$ terms of the Taylor expansion of $e^t$. Then, for $t<0$, we have $\phi(t)<e^{t}<\Phi(t)$.
We employ the first $7$ and $8$ terms of the Taylor expansion of \(\sqrt{w^2-\frac{2\delta\pi^2}{3}}\) and \(\sqrt{w^2+\frac{2\eta\pi^2}{3}}\) to approximate their upper and lower bounds, where $\delta= 1, 2, 3$ and \(\eta = 1, 2\), as in the proof of Theorem \ref{LIv}.

With the analogous arguments from Section \ref{LIp}, we determine that $A_1(w)$ is a certain polynomial of degree $150$ with a positive leading coefficient. For $n\geq 45284$, $A(w)$ is positive, leading to 
\eqref{DIneq v} for $\Delta p(n)$. The case for $345\leq n \leq 45283$ can be verified with Mathematica. Hence, the proof of Theorem \ref{DIv} is completed.
\end{proof}
The proof of Theorem \ref{DIbarv} closely follows the structure of Theorem \ref{DIv}. Utilizing Lemma \ref{boundbarp} and maintaining notations from the proof of Theorem \ref{DIv}, we adapt $p(n)$ and $\mu(n)$ to $\bar{p}(n)$ and $\bar{\mu}(n)$, respectively, in line with Section \ref{TIp}. Details are omitted for brevity, and we present the final result.

Applying arguments akin to those in the proof of Theorem \ref{DIv}, we observe that $A_1(w)$ is a polynomial of degree $150$ with a positive leading coefficient. Verification confirms $A(w)>0$ for $n\geq 22275$, establishing \eqref{DIneq barv} for $\Delta\bar{p}(n)$ within this range. Further numerical evidence reveals that for $62\leq n \leq 22274$, $\Delta\bar{p}(n)$ also satisfies \eqref{DIneq barv}. Thus, the proof of Theorem \ref{DIbarv} is concluded.
\qed

\section{Open problems}\label{Conj}
In this section, we conjecture thresholds for Laguerre inequalities of order $m$ and the positivity of $m$-th order determinants. Moreover, we conjecture thresholds for the third-order Tur\'an inequalities and the positivity of invariant $I$. These conjectures apply to the $k$-th order differences of $p(n)$ and $\bar{p}(n)$, where $1 \leq m \leq 11$ and $1 \leq k \leq 5$.
\begin{conj}
    For $1 \leq m \leq 11$ and $1 \leq k \leq 5$$:$
    
    \begin{enumerate}
        \item  $\Delta^k p(n)$ $(\text{resp.}~\Delta^k \bar{p}(n))$ satisfies the Laguerre inequality of order $m$ when $n \geq L_p(k, m)$ $(\text{resp.}~n \geq L_{\bar{p}}(k, m))$.
        \item The $m$-th order determinants of $(\text{resp.}~\Delta^k \bar{p}(n))$ are positive for $n \geq D_p(k, m)$ $(\text{resp.}~n \geq D_{\bar{p}}(k, m))$, except when both $k\geq3$ and $m$ are odd.
    \end{enumerate}
\end{conj}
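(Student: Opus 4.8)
The plan is to show that every assertion in the conjecture is governed by the same analytic scheme that proves Theorems \ref{TIv}--\ref{DIbarv}, so that ``proving the conjecture'' amounts to automating that scheme and pushing the bookkeeping through for $1\le m\le 11$ and $1\le k\le 5$. First I would record the binomial expansion
\[
\Delta^k p(n)=\sum_{i=0}^{k}(-1)^{k-i}\binom{k}{i}p(n+i),
\]
and the analogue for $\bar p$, so that after expansion the Laguerre form $L_m\!\big(\Delta^k p(n)\big)$ becomes a bilinear form, and the determinant $\det\!\big(\Delta^k p(n-i+j)\big)_{1\le i,j\le m}$ a degree-$m$ form, in the values $p(n),p(n+1),\dots,p(n+2m+k)$ with explicit integer coefficients. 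To absorb the growth in $m$ I would invoke the iterated constructions already in the literature: the Dou--Wang reduction \cite{dw} of $L_m$ to order-$2$-type estimates, and the Wang--Yang recursion \cite{wang3} computing $\det(a_{n-i+j})_{1\le i,j\le m}$ from smaller determinants; both commute with applying $\Delta^k$ term by term. Note that the cases $(k,m)=(1,2)$ for the Laguerre inequality and $(1,3)$ for the determinant are already Theorems \ref{LIv} and \ref{DIv}, with thresholds matching the conjectured $L_p(1,2)$ and $D_p(1,3)$, which is the sanity check that the template is the right one.

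For a fixed pair $(k,m)$ I would then run the five moves of Section \ref{TIp}. (i) Choose $l$ (resp.\ $t$) in Lemma \ref{boundp} (resp.\ Lemma \ref{boundbarp}) large enough: the data $l=6,8,12$ for Turán order $2$, Laguerre order $2$, and determinant order $3$ suggests $l$ roughly linear in the number of $p$-factors, say $l\asymp 2m+2k$ for $L_m$ and $l\asymp(2+c)m+2k$ for the order-$m$ determinant, and replace each $p(n+\ell)$ by $f(n+\ell)$ or $g(n+\ell)$ according to the sign of its coefficient. (ii) Divide through by $f$ at the central shift raised to the homogeneity degree -- $f(\cdot)^2$ for $L_m$, $f(\cdot)^m$ for the order-$m$ determinant -- choosing the centre so that every exponent $E_i$ in the resulting $\sum_i\pm H_i e^{E_i}$ is negative; Section \ref{TIp} stresses that this choice of denominator is essential. (iii) Write every $\mu(n+j)$ as $\sqrt{w^2+c_j}$ with $w=\mu(\text{centre})$ and bound it above and below by truncated Taylor polynomials in $1/w$. (iv) Bound each $e^{E_i}$ above by $\Phi(E_i)$ and below by $\phi(E_i)$ for suitable even/odd Taylor truncations. (v) Collect everything into a single rational function $A_1(w)=\big(\sum_k a_k w^k\big)/(c\,w^{N})$ with $c>0$, check that the leading coefficient is positive -- this is where an inequality of the shape $\pi^{2m}>(\text{explicit rational})$ enters, exactly as the factor $\pi^6(\pi^2-6)>0$ does in $a_{66}$ of Theorem \ref{TIv} -- dominate the remaining $a_k$ by the two top coefficients to reduce to a quadratic in $w$, and extract an effective threshold $N_0(k,m)$. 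Finally verify the finitely many $n$ from the conjectured threshold up to $N_0(k,m)$ by direct computation of $p(n)$, $\bar p(n)$.

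For part (2), the excluded family $k\ge 3$ with $m$ odd requires the opposite conclusion. For $m=1$ the determinant is just $\Delta^k p(n)$, which is eventually positive by Odlyzko's theorem, so the genuine content is $m\ge 3$ odd, where I expect the determinant to be eventually negative; the plan there is to run the same asymptotic analysis and prove that the leading coefficient of the governing polynomial is \emph{negative}, the sign flip being forced by the parity of $k$ together with the antisymmetry of an odd-order determinant under the shift that $\Delta^{\text{odd}}$ introduces, thereby explaining why no positivity threshold exists, plus a short separate check for small $n$. The ``moreover'' remarks -- thresholds for the third-order Turán inequality and positivity of the invariant $I$ of \cite{Chen3} -- fit the same template, since $I=A^3-27B^2$ is a polynomial in the $p$-values and $A>0$, $B>0$ are precisely the order-$2$ Laguerre and order-$3$ determinant statements already treated.

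The hard part will be twofold. Analytically, one must verify, uniformly over the several hundred cases, that a single judicious choice of denominator makes every exponent $E_i$ negative and that the predicted sign of the leading polynomial coefficient holds for every $m$; neither is automatic, and for the determinant family the polynomial degree grows linearly in the number of $p$-factors, so for $m=11$, $k=5$ one manipulates univariate polynomials of degree in the hundreds and must tune $l$ and the Taylor truncation orders just large enough to keep the leading term honest without rendering the symbolic computation infeasible. Computationally, the finite ranges $L_p(k,m)\le n<N_0(k,m)$ (and their determinant analogues) balloon with $k$ and $m$ -- already $n$ up to $45283$ for the order-$3$ determinant of $\Delta p$ -- so one needs high-precision evaluation of $p(n)$ and $\bar p(n)$, via the Rademacher series or the recurrences, into the tens of thousands, together with careful accounting to be sure the analytic threshold is actually attained in each case.
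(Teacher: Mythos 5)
This statement is one of the paper's open conjectures: the paper supplies no proof of it at all, only the numerically observed thresholds in Tables \ref{L_p}--\ref{D_P}. So there is nothing in the paper to compare your argument against, and the relevant question is whether your proposal actually closes the problem. It does not: it is a programme, not a proof, and you say as much yourself in the final paragraph. The three load-bearing claims are all left as expectations rather than established facts. First, you assert that for every pair $(k,m)$ a single choice of denominator makes every exponent $E_i$ negative and that the leading coefficient of the resulting polynomial is positive; the paper's own proofs show that already for $(k,m)=(1,2)$ and $(1,3)$ these facts require case-specific verification (the sign of $a_{66}$ in Theorem \ref{TIv} hinges on $\pi^2>6$, and the ``judicious choice of the denominator'' is flagged as essential), and nothing in your sketch guarantees they persist across several hundred cases with polynomials of degree in the hundreds. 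Second, the claim that the Dou--Wang reduction \cite{dw} and the Wang--Yang determinant recursion \cite{wang3} ``commute with applying $\Delta^k$ term by term'' is stated without justification; $L_m$ and the $m\times m$ determinant are not linear in the sequence, so this is not a formal triviality and is precisely where the difficulty of generalizing from $m=2,3$ to $m\le 11$ lives. Third, for the excluded family ($k\ge 3$ and $m$ odd, marked $\times$ in the tables) the conjecture asserts that \emph{no} threshold exists; your plan to show the determinant is eventually negative is only a guess at the mechanism (``I expect''), and proving non-existence of a threshold is a genuinely different and unaddressed task from proving eventual positivity.

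Beyond these, even granting the analytic scheme, your argument would produce some effective bound $N_0(k,m)$ together with a finite computation, which could at best \emph{confirm} the tabulated values $L_p(k,m)$, $L_{\bar p}(k,m)$, $D_p(k,m)$, $D_{\bar p}(k,m)$; it cannot derive them, and the finite verification ranges (already $n\le 45283$ for the single case of Theorem \ref{DIv}) are themselves only feasible, not done. In short, your proposal is a faithful and plausible extrapolation of the method of Sections \ref{TIp}--\ref{DIp}, and it correctly identifies where the work lies, but as it stands it establishes none of the conjectured statements; it should be presented as a strategy for attacking the conjecture, not as a proof of it.
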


The values of $L_p(k, m)$, $L_{\bar{p}}(k, m)$, $D_p(k, m)$ and $D_{\bar{p}}(k, m)$ are specified in Tables \ref{L_p}, \ref{L_P}, \ref{D_p} and \ref{D_P}, respectively.
    The symbol $\times$ indicates that there does not exist a value satisfying corresponding conjecture.
    
\begin{conj}\label{conj}
    For $1 \leq k \leq 5$$:$
    
    \begin{enumerate}
        \item  $\Delta^k p(n)$ $(\text{resp.}~\Delta^k \bar{p}(n))$ satisfies the third-order Tur\'an inequality when $n \geq T_p(k)$ $(\text{resp.}~ n \geq T_{\bar{p}}(k))$.
        \item  $\Delta^k p(n)$ $(\text{resp.}~\Delta^k \bar{p}(n))$ satisfies $I>0$ when $n \geq I_p(k)$ $(\text{resp.}~n \geq I_{\bar{p}}(k))$.
    \end{enumerate}
\end{conj}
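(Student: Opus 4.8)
The plan is to attack Conjecture~\ref{conj} with the same machinery that drives Sections~\ref{TIp}--\ref{DIp}: the explicit two-sided bounds of Lemma~\ref{boundp} and Lemma~\ref{boundbarp} together with the Dou--Wang reduction of a Tur\'an-type inequality to a one-variable polynomial positivity problem. I will describe only the $\Delta^k p(n)$ case, the $\Delta^k \bar p(n)$ case being identical after replacing $\mu(n)$ by $\bar\mu(n)$ and adjusting the numerical constants. First I would record the linear expansion $\Delta^k p(n)=\sum_{i=0}^{k}(-1)^{k-i}\binom{k}{i}p(n+i)$, so that each of the four values $\Delta^k p(n-1),\dots,\Delta^k p(n+2)$ occurring in the third-order Tur\'an inequality (resp.\ the five values $\Delta^k p(n),\dots,\Delta^k p(n+4)$ feeding the invariant $I$) becomes a $\mathbb{Z}$-linear combination of $p(n-1),p(n),\dots,p(n+k+2)$. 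Substituting these into
\[
4\,\mathcal{L}\big(\Delta^k p(n)\big)\,\mathcal{L}\big(\Delta^k p(n+1)\big)-\big(\Delta^k p(n)\,\Delta^k p(n+1)-\Delta^k p(n-1)\,\Delta^k p(n+2)\big)^{2}
\]
and into $I=A^{3}-27B^{2}$ with $a_j=\Delta^k p(n+j)$ produces a polynomial in the $p(\cdot)$'s with integer coefficients whose positivity is to be shown.

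The second step is the substitution. In each product $p(m)p(m')$ appearing in that polynomial, replace $p$ by the lower bound $f$ from Lemma~\ref{boundp} (for a suitable $l$) when the monomial carries a $+$ sign and by the upper bound $g$ when it carries a $-$ sign, exactly as at \eqref{TIneq vp}. This is legitimate only after one knows the relevant blocks are themselves of a definite sign, so the argument must be bootstrapped: positivity of $\Delta^k p(n)$ (Gupta, Odlyzko), its log-concavity (the companion conjecture for $m=1$), and the order-$2$ Laguerre and determinant inequalities all have to be in place first, since the third-order Tur\'an expression and $I$ are assembled from products of exactly those lower-order Tur\'an-type quantities --- and it is this dependence that also forces the even/odd restriction seen in the first conjecture. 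Writing every $\mu(n+s)$ in terms of a single $w$ via $\sqrt{w^{2}+\tfrac{2c\pi^{2}}{3}}$, choosing the denominator in the analogue of \eqref{TIneq vp} so that each exponent $\mu(n+s)-\mu(n+s_0)$ is negative, Taylor-expanding the square roots and $e^{t}$ and bounding them by partial sums $\phi,\Phi$ from below and above as in the proof of Theorem~\ref{TIv}, the inequality collapses to the positivity of a single polynomial $\sum_k a_k w^{k}$ of (large) degree; one checks the leading coefficients are positive, dominates the remaining terms by $|a_k|w^{k}\le a_{\deg}w^{\deg}$ for $w$ past an explicit point, and reduces to a quadratic in $w$, yielding a provable threshold $N_0$. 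A finite computation over $T_p(k)\le n<N_0$ then closes the gap and pins down the conjectured value of $T_p(k)$ (resp.\ $I_p(k)$).

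The main obstacle is computational and comes in two flavours. Since $A$, $B$, $I$ are degree-$4$ invariants and $I$ is cubic in quantities that are already quadratic in $p$, the polynomials produced here are of substantially higher degree than the degree-$150$ objects appearing in Section~\ref{DIp}; one must push the truncation orders of the square-root and exponential expansions high enough that the error terms do not overwhelm the leading behaviour, yet keep the symbolic computation within reach of Mathematica. More seriously, the term-by-term domination used to pass from $\sum_k a_k w^{k}>0$ to a threshold is very lossy, so the provable $N_0$ may lie far beyond the conjectured $T_p(k)$ or $I_p(k)$, rendering the ``finite'' numerical verification infeasible for the larger values of $k$. Closing that gap would require replacing the uniform Rademacher-type error bounds of Lemmas~\ref{boundp}--\ref{boundbarp} by a genuinely sharper local analysis --- along the lines of the Knessl--Keller recursions or Almkvist's explicit expansions --- which is where the real work lies.
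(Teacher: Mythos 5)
This statement is posed in the paper as an open conjecture: Section~\ref{Conj} gives no proof, only tables of thresholds $T_p(k)$, $T_{\bar p}(k)$, $I_p(k)$, $I_{\bar p}(k)$ obtained (implicitly) by numerical experiment. So there is no proof in the paper to compare against, and the real question is whether your proposal itself constitutes a proof. It does not. What you have written is a plausible roadmap that extrapolates the paper's own method (Lemmas~\ref{boundp} and~\ref{boundbarp}, the sign-sorted substitution of upper/lower bounds as at \eqref{TIneq vp}, Taylor truncation of the square roots and of $e^t$, and reduction to a single polynomial in $w$), but none of the steps that would actually establish the conjecture are carried out: no explicit polynomial $\sum_k a_k w^k$ is produced for any $k$, no leading coefficients are verified to be positive, no provable threshold $N_0$ is derived, and no finite verification from $T_p(k)$ or $I_p(k)$ up to $N_0$ is performed. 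You acknowledge in your final paragraph that the last of these may be out of reach with the stated machinery; that concession is honest, but it means the argument terminates before reaching the statement to be proved.

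Two further points deserve flagging. First, your claim that the even/odd restriction in the first conjecture is ``forced'' by a bootstrapping dependence on lower-order inequalities is not supported: the $\times$ entries in Tables~\ref{D_p} and~\ref{D_P} record that no threshold appears to exist (the determinants are presumably not eventually positive for those parities), which is a statement about the sequences themselves, not about the structure of a proof. Second, for $I=A^3-27B^2$ the monomial-by-monomial replacement of $p$ by $f$ or $g$ according to sign is valid in principle but is known to be extremely lossy precisely because $I$ encodes massive cancellation between $A^3$ and $27B^2$; it is not clear that any admissible choice of $l$ and truncation orders yields a lower-bounding polynomial with positive leading coefficient at all, and this feasibility question is exactly what separates a strategy from a proof. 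As it stands, the conjecture remains open and your proposal should be presented as a proposed line of attack, not a proof.
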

The values of $T_p(k)$, $T_{\bar{p}}(k)$, $I_p(k)$ and $I_{\bar{p}}(k)$ are specified in Table \ref{T_p}.

\begin{table}[htbp]
\centering
\resizebox{\textwidth}{!}{
\begin{tabular}{|c|*{11}{>{\centering\arraybackslash}c|}}
  \hline
  \begin{minipage}{1.8cm}
    \begin{tikzpicture}[baseline=(current bounding box.center), trim left=0.2cm]
      \draw (0,1.5) -- (0.6,0);
      \draw (0,1.5) -- (2.2,0.8);
      \node at (0.2,0.5) {$k$};
      \node at (1.2,0.6) {$L_p(k,m)$};
      \node at (1.8,1.2) {$m$};
    \end{tikzpicture}
  \end{minipage}
  & 1 & 2 & 3 & 4 & 5 & 6 & 7 & 8 & 9 & 10 & 11 \\
\hline
1 & 70 & 301 & 738 & 1413 & 2346 & 3557 & 5062 & 6873 & 9006 & 11467 & 14270 \\
2 & 138 & 451 & 986 & 1767 & 2816 & 4151 & 5786 & 7733 & 10006 & 12613 & 15566 \\
3 & 234 & 637 & 1272 & 1767 & 3334 & 4795 & 6562 & 8649 & 11064 & 13819 & 16922 \\
4 & 362 & 859 & 1602 & 2609 & 2346 & 5491 & 7394 & 9619 & 12180 & 15083 & 18340 \\
5 & 522 & 1121 & 1974 & 3101 & 4518 & 6241 & 8280 & 10649 & 13356 & 16411 & 19822 \\
\hline
\end{tabular}
}
\caption{The values of $L_p(k,m)$.}
\label{L_p}
\end{table}

\begin{table}[htbp]
\centering
\resizebox{\textwidth}{!}{
\begin{tabular}{|c|*{11}{>{\centering\arraybackslash}c|}}
  \hline
  \begin{minipage}{1.8cm}
    \begin{tikzpicture}[baseline=(current bounding box.center), trim left=0.2cm]
      \draw (0,1.5) -- (0.6,0);
      \draw (0,1.5) -- (2.2,0.8);
      \node at (0.2,0.5) {$k$};
      \node at (1.2,0.6) {$L_{\bar{p}}(k,m)$};
      \node at (1.8,1.2) {$m$};
    \end{tikzpicture}
  \end{minipage}
 & 1 & 2 & 3 & 4 & 5 & 6 & 7 & 8 & 9 & 10 & 11 \\
\hline
1 & 7 & 50 & 142 & 294 & 509 & 799 & 1167 & 1616 & 2146 & 2778 & 3497 \\
2 & 21 & 89 & 208 & 390 & 641 & 967 & 1371 & 1859 & 2440 & 3111 & 3785 \\
3 & 45 & 131 & 277 & 489 & 773 & 1126 & 1575 & 2105 & 2722 & 3435 & 4241 \\
4 & 69 & 179 & 352 & 588 & 908 & 1300 & 1779 & 2345 & 3001 & 3753 & 4601 \\
5 & 98 & 232 & 429 & 698 & 1045 & 1473 & 1985 & 2587 & 3282 & 4073 & 4963 \\
\hline
\end{tabular}
}
\caption{The values of $L_{\bar{p}}(k,m)$.}
\label{L_P}
\end{table}

\begin{table}[htbp]
\centering
\resizebox{\textwidth}{!}{
\begin{tabular}{|c|*{11}{>{\centering\arraybackslash}c|}}
  \hline
  \begin{minipage}{1.8cm}
    \begin{tikzpicture}[baseline=(current bounding box.center), trim left=0.2cm]
      \draw (0,1.5) -- (0.6,0);
      \draw (0,1.5) -- (2.2,0.8);
      \node at (0.2,0.5) {$k$};
      \node at (1.2,0.6) {$D_p(k,m)$};
      \node at (1.8,1.2) {$m$};
    \end{tikzpicture}
  \end{minipage}
  & 1 & 2 & 3 & 4 & 5 & 6 & 7 & 8 & 9 & 10 & 11 \\
\hline
1 & 1 & 69 & 345 & 879 & 1709 & 2857 & 4347 & 6197 & 8419 & 11029 & 14039 \\
2 & 7  & 137 & 503 & 1145 & 2091 & 3369 & 4995 & 6987 & 9359 & 12123 & 15293 \\
3 & $\times$  & 233 & $\times$  & 1451 & $\times$ &  3929 & $\times$ &  7831 & $\times$ &  13275 & $\times$ \\
4 & 67  & 361 & 929 & 1797 & 2997 & 4537 & 6443 & 8729 & 11405 & 14485 & 17979 \\
5 & $\times$  & 521 & $\times$  & 2189 & $\times$ &  5197 & $\times$ &  9683 & $\times$ &  15755 & $\times$ \\
\hline
\end{tabular}
}
\caption{The values of $D_p(k,m)$.}
\label{D_p}
\end{table}

\begin{table}[htbp]
\centering
\resizebox{\textwidth}{!}{
\begin{tabular}{|c|*{11}{>{\centering\arraybackslash}c|}}
  \hline
  \begin{minipage}{1.8cm}
    \begin{tikzpicture}[baseline=(current bounding box.center), trim left=0.2cm]
      \draw (0,1.5) -- (0.6,0);
      \draw (0,1.5) -- (2.2,0.8);
      \node at (0.2,0.5) {$k$};
      \node at (1.2,0.6) {$D_{\bar{p}}(k,m)$};
      \node at (1.8,1.2) {$m$};
    \end{tikzpicture}
  \end{minipage}
   & 1 & 2 & 3 & 4 & 5 & 6 & 7 & 8 & 9 & 10 & 11 \\
\hline
1 & 1 & 6 & 62 & 185 & 389 & 674 & 1055 & 1535 & 2120 & 2813 & 3620 \\
2 & 1  & 20 & 104 & 257 & 494 & 821 & 1241 & 1766 & 2396 & 3134 & 3992 \\
3 & $\times$  & 44 & $\times$ & 335 & $\times$ & 965 & $\times$ & 1991 & $\times$ & 3449 & $\times$ \\
4 & 10  & 68 & 200 & 416 & 716 & 1115 & 1613 & 2216 & 2930 & 3758 & 4706 \\
5 & $\times$  & 97 & $\times$ & 496 & $\times$ & 1264 & $\times$ & 2440 & $\times$ & 4066 & $\times$ \\
\hline
\end{tabular}
}
\caption{The values of $D_{\bar{p}}(k,m)$.}
\label{D_P}
\end{table}

\begin{table}[htbp]
  \centering
   \renewcommand{\arraystretch}{1}
    \setlength{\tabcolsep}{3pt}
\begin{tabular}{|c|*{5}{>{\centering\arraybackslash}c|}}
\hline
$k$ & 1 & 2 & 3 & 4 & 5 \\
\hline
$T_p(k)$ & 174 & 284 & 424 & 600 & 810 \\
$T_{\bar{p}}(k)$ & 33 & 57 & 87 & 118 & 173 \\
$I_p(k)$ & 329 & 483 & 675 & 903 & 1171 \\
$I_{\bar{p}}(k)$ & 64 & 98 & 143 & 194 & 255 \\
\hline
\end{tabular}
\caption{The values of Conjecture \ref{conj}.}
\label{T_p}
\end{table}

\noindent {\bf Conclusion.} The proof strategy for all inequalities in this paper relies on the key Lemma \ref{boundp} and Lemma \ref{boundbarp}. The process involved determining an appropriate \(l\) (resp. $t$) in Lemma \ref{boundp} (resp. Lemma \ref{boundbarp}), reasonably selecting the denominator as in \eqref{TIneq vp} to ensure the negativity of the power part of $e^t$, truncating the terms of the Taylor expansion for $e^t$, \(\sqrt{w^2-\frac{2\delta\pi^2}{3}}\) and \(\sqrt{w^2+\frac{2\eta\pi^2}{3}}\) (resp. \(\sqrt{w^2-\delta\pi^2}\) and \(\sqrt{w^2+\eta\pi^2}\)), where $\delta= 1, 2, \ldots, m$ and \(\eta = 1, 2, \ldots, m-1\), at a suitable point depending on the Laguerre inequality of order $m-1$ or the positivity of $m$-th order determinants under consideration.

{\centering\section*{Acknowledgments}}
 I am grateful to Prof. Qing-Hu Hou for his dedicated guidance and to the referees for their valuable feedback. This research was financially supported by the National Natural Science Foundation of China (Grant 11921001).


\begin{thebibliography}{99}
\bibitem{Almkvist} G. Almkvist, Exact Asymptotic Formulas for the Coefficients of Nonmodular Functions, J. Number Theory. 38 (1991), 145--160.

\bibitem{Almkvist2} G. Almkvist, On the differences of the partition function, Acta Arith. 61 (1992), 173--181.

\bibitem{Banerjee} K. Banerjee, Inequalities for the modified bessel function of first kind of non-negative order, J. Math. Anal. Appl. 524 (1) (2023), 127082.

\bibitem{Canfield} R. Canfield, S. Corteel and P. Hitczenko, Random partitions with non-negative r-th differences, Adv. Appl. Math. 27 (2001), 298--317.

\bibitem{Cardon} D.A. Cardon and A. Rich, Tur\'an inequalities and subtraction-free expressions, JIPAM. J. Inequal. Pure Appl. Math. 9 (4) (2008), Artical 91. 11 pp.

\bibitem{Chen3} W.Y.C. Chen, The spt-Function of Andrews, Sueveys in combinatorics, London Math. Soc. Lecture Note Ser., 440, Cambridge Univ. Press, Cambridge, 2017, 141--203.

\bibitem{Chen-Jia-Wang-2017} W.Y.C. Chen, D.X.Q. Jia and L.X.W. Wang, Higher order Tur\'an inequalities for the partition function, Trans. Amer. Math. Soc.  372 (3)
(2019), 2143--2165.

\bibitem{Chen-Wang-Xie} W.Y.C. Chen, L.X.W. Wang and G.Y.B. Xie, Finite differences of the logarithm of the partition function, Math. Comp. 85 (2016), 825--847.


\bibitem{Craig} W. Craig and A. Pun, A note on the higher order Tur\'{a}n inequalities for $k$-regular partitions, Res. Number Theory. 7 (5) (2021), 1--7.

\bibitem{Craven} T. Craven and G. Csordas, Jensen polynomials and the Tur\'{a}n and Laguerre inequalities, Pacific J. Math. 136 (2) (1989), 241--260. 

\bibitem{Csordas5} T. Craven and G. Csordas, Iterated Laguerre and Tur\'{a}n inequalities, JIPAM. J. Inequal. Pure Appl. Math. 3 (3) (2002), Artical 39. 14 pp. 

\bibitem{Csordas3} G. Csordas, Complex zero decreasing sequences and the Riemann Hypothesis II, Analysis and applications-ISAAC 2001 (Berlin), Int. Soc. Anal. Appl. Comput., vol. 10, Kluwer Acad. Publ., Dordrecht, 2003, 121--134.

\bibitem{Csordas} G. Csordas, T.S. Norfolk and R.S. Varga, The Riemann hypothesis and the Tur\'an inequalities, Trans. Amer. Math. Soc. 296 (2) (1986), 521--541. 

\bibitem{Csordas2} G. Csordas and R.S. Varga, Necessary and sufficient conditions and the Riemann hypothesis, Adv. Appl. Math. 11 (3) (1990), 328--357. 


\bibitem{Desalvo} S. DeSalvo and I. Pak, Log-concavity of the partition function, Ramanujan J. 38 (1) (2015), 61--73.

\bibitem{Engel} B. Engel, Log-concavity of the overpartition function [J], Ramanujan J. 43 (2) (2017), 229--241.

\bibitem{K.Dilcher} K. Dilcher and K.B. Stolarsky, On a class of nonlinear differential operators acting on polynomials, J. Math. Anal. Appl. 170 (2) (1992), 382--400. 

\bibitem{Dimitrov} D.K. Dimitrov, Higher order Tur\'{a}n inequalities, Proc. Amer. Math. Soc. 126 (7) (1998), 2033--2037.

\bibitem{Dimitrov2} D.K. Dimitrov and F.R. Lucas, Higher order Tur\'an inequalities for the Riemann $\xi$-function, Proc. Amer. Math. Soc. 139 (3) (2011), 1013--1022. 

\bibitem{DJ} J.J.W. Dong and K.Q. Ji, Higher order Tur\'an inequalities for the distinct partition function, 2023, arXiv:2303.05243.

\bibitem{Dong} J.J.W. Dong, K.Q. Ji and D.X.Q. Jia, Tur\'an inequalities for the broken $k$-diamond partition function, Ramanujan J. 62 (2) (2023), 593--615.

\bibitem{dw} L.M. Dou and L.X.W. Wang, Higher order Laguerre inequalities for the partition function, Discret. Math. 346 (6) (2023), 113366.


\bibitem{W.H.Foster} W.H. Foster and I. Krasikov, Inequalities for real-root polynomials and entire functions, Adv. Appl. Math. 29 (1) (2002), 102--114. 

\bibitem{Gomez} K. Gomez, J. Males and L. Rolen, The second shifted difference of partitions and its applications, Bull. Aust. Math. Soc. 107 (2023), 66--78.

\bibitem{Good} I.J. Good, The difference of the partition function, Problem 6137, Am. Math. Mon. 84 (1997), 141.

\bibitem{Zagier}  M. Griffin, K. Ono, L. Rolen and D. Zagier, Jensen polynomials for the Riemann zeta function and other sequences, Proc. Natl. Acad. Sci. 116 (23) (2019), 11103--11110. 
    
\bibitem{Gupta} H. Gupta, Finite differences of the partition function, Math. Comp. 32 (1978), 1241--1243.
    
\bibitem{hz} Q. Hou and Z. Zhang,  $r$-log-concavity of partition functions, Ramanujan J. 48 (1) (2019), 117--129.

\bibitem{Jensen} J.L.W.V. Jensen, Recherches sur la th\'eorie des \'equations, Acta Math. 36 (1913), 181--195. 

\bibitem{Jia} D.X.Q. Jia, Inequalities for the broken $k$-diamond partition function, J. Number Theory. 249 (2023), 314--347.

\bibitem{Jia-Wang-2018} D.X.Q. Jia and L.X.W. Wang, Determinantal inequalities of partition function, Proc. Royal Soc. Edinb. A. 150 (3) (2020), 1451--1466.

\bibitem{Knessl00} C. Knessl, Asymptotic behavior of high-order differences of the plane partition function, Discret. Math. 126 (1994), 179--193.


\bibitem{Knessl0} C. Knessl and J.B. Keller, Partition asymptotics from recursion equations, SIAM J. Appl. Math. 50 (1990), 323--338.

\bibitem{Knessl} C. Knessl and J.B. Keller, Asymptotic behavior of high-order differences of the partition function, Commun. Pure Appl. Math. 44 (1991), 1033--1045.


\bibitem{Laguerre} E. Laguerre, Oeuvres, vol.1,  (Paris: Gaauthier-Villars, 1898). 


\bibitem{larson} H. Larson and I. Wagner, Hyperbolicity of the partition Jensen polynomials, Res. Number Theory. 5 (19) (2019), 1--12. 

\bibitem{Lehmer1} D.H. Lehmer, On the series for the partition function, Trans. Amer. Math. Soc., 43 (2) (1938), 271--295. 


\bibitem{Lehmer2} D.H. Lehmer, On the Remainders and Convergence of the Series for the Partition Function [J], Trans. Amer. Math. Soc. 46 (1939), 362--373. 


\bibitem{Levin} B.J. Levin, Distribution of zeros of entire functions, Gosudarstv. Izdat. Tehn.-Teor. Lit., Moscow, 1956, 632 pp.



\bibitem{Liu} E.Y.S. Liu and H.W.J. Zhang, Inequalities for the overpartition function, Ramanujan J. 54 (3) (2021), 485--509.

\bibitem{Merca} M. Merca and J. Katriel, A general method for proving the non-trivial linear homogeneous partition inequalities, Ramanujan J. 51 (2020), 245--266.


\bibitem{Nicolas} J.L. Nicolas, Sur les entiers N pour lesquels il y a beaucoup de groupes ab\'eliens $d^{\prime}$ordre N, Ann. Inst. Fourier. 28 (4) (1978), 1--16.

\bibitem{Odlyzko} A.M. Odlyzko, Differences of the partition function, Acta Arith. 49 (1988), 237--254.


\bibitem{Patrick1} M.L. Patrick, Some inequalities concerning Jacobi polynomials, SIAM J. Math. Anal. 2 (2) (1971), 213--220. 

\bibitem{Patrick} M.L. Patrick, Extensions of inequalities of the Laguerre and Tur\'an type, Pacific J. Math. 44 (2) (1973), 675--682. 

\bibitem{Polya} G. P\'olya, \"{U}ber die algebraisch-funktionentheoretischen Untersuchungen von J.L.W.V. Jensen, Kgl. Danske Vid. Sel. Math.-Fys. Medd. 7 (1927), 3--33.


\bibitem{Rahman} Q.I. Rahman and G. Schmeisser, Analytic theroy of polynomials, Oxford University Press, Oxford, 2002. xiv+742 pp. 

\bibitem{Skovgaard} H. Skovgaard, On inequalities of the Tur\'an type, Math. Scand. 2 (1954), 65--73.

\bibitem{Wagner} I. Wagner, On a new class of Laguerre-P\'olya type functions with applications in number theory, Pacific J. Math. 320 (1) (2022), 177--192.

\bibitem{wang1} L.X.W. Wang, G.Y.B. Xie and A.Q. Zhang, Finite difference of the overpartition function, Adv. Appl. Math. 92 (2018), 51--72.

\bibitem{wang2} L.X.W. Wang and E.Y.Y. Yang, Laguerre inequalities for discrete sequences, Adv. Appl. Math. 139 (2022), 102357.


\bibitem{wang3} L.X.W. Wang and N.N.Y. Yang, Positivity of the determinants of the partition function and the overpartition function, Math. Comput. 341 (92) (2023), 1383--1402.

\bibitem{wang4} L.X.W. Wang and E.Y.Y. Yang, Laguerre inequality and determinantal inequality for the distinct partition function, submitted.
    
        
\bibitem{Yang} E.Y.Y. Yang, Finite differences of the distinct partition function and the broken $k$-diamond partition function, submitted.

\bibitem{Yang1} E.Y.Y. Yang, Laguerre inequality and determinantal inequality for the broken $k$-diamond partition function, submitted.
    
\bibitem{Zuckerman} H.S. Zuckerman, On the coefficients of certain modular forms belonging to subgroups of the modular group. Trans. Am. Math. Soc. 45 (2) (1939), 298--321. 
\end{thebibliography}
\end{document}